\newcommand{\compactlist}{\begin{list}{$\bullet$}{\setlength{\leftmargin}{1em}}}
\def\zz{{\bf Z}}
\def\qq{{\bf Q}}
\def\rr{{\bf R}}
\def\co{\colon\thinspace}
\def\calg{\mathcal{G}}
\newcommand{\fig}[2] { \includegraphics[scale=#1]{#2} }
\newcommand{\Tors}{\operatorname{Torsion}}
\newcommand{\Tor}{\operatorname{Tor}}
\newcommand{\Arf}{\operatorname{Arf}}
\newcommand{\lk}{\ell k}
\newtheorem{theorem}{Theorem}
\newtheorem{lemma}[theorem]{Lemma}
\newtheorem{corollary}[theorem]{Corollary}
\newtheorem{prop}[theorem]{Proposition}
\newtheorem{definition}[theorem]{Definition}
\newtheorem{conj}[theorem]{Conjecture}
\newtheorem{ques}[theorem]{Question}
\def\co{\colon\thinspace}
\numberwithin{equation}{section}
\begin{document}

\title{On surgery curves for genus one slice knots}
\author{Patrick M. Gilmer}
\author{Charles Livingston} 
\thanks{The first author was partially supported by    NSF-DMS-0905736}  
\thanks{The second author was partially supported by    NSF-DMS-1007196}
\thanks{{ \em 2010 Mathematics Subject Classification.}  57M25}
 \address{Patrick Gilmer: Department of Mathematics, Louisiana State University, Baton Rouge, LA 70803}
\email{gilmer@math.lsu.edu}
 \address{Charles Livingston: Department of Mathematics, Indiana University, Bloomington, IN 47405 }
\email{livingst@indiana.edu}


 \begin{abstract}  If a knot $K$ bounds a genus one Seifert surface $F \subset S^3$  and $F$ contains an essential simple closed curve $\alpha$ that has induced framing 0 and is smoothly slice, then $K$ is smoothly slice.  Conjecturally, the converse holds.  It is known that if $K$ is slice, then there are strong constraints on the algebraic concordance class of  such $\alpha$, and it was 
  thought that these constraints might imply that
 $\alpha$ is at least algebraically slice.  We present a counterexample; in the process we answer negatively a question of Cooper and relate the result to a problem of Kauffman.   Results of this paper depend on the interplay between the Casson-Gordon invariants of $K$ and algebraic invariants of $\alpha$. 
 \end{abstract}

\maketitle
 

 \section{Introduction.}
 
For $n>1$, if a smooth knotted $S^{2n-1}$ in $S^{2n+1}$ bounds an embedded disk in $B^{2n+2}$, such a smooth slicing disk can be constructed from a $2n$--manifold bounded by $K$ in $S^{2n+1}$ by  ambient surgery.  Whether the same is true for knots in $S^3$ has remained an open question for 40 years, though by the work of  Freedman~\cite{freedman-quinn},  counterexamples exist in the topological category. 
  
One well-known and simply stated conjecture~\cite[Problem 1.38]{kirby} is a special case:  the untwisted Whitehead double of a knot  $J \subset S^3$ is smoothly slice if and only if $J$ is  smoothly slice.   
More generally, if $K$ is a knot in $S^3$  that bounds a genus one Seifert surface $F$ and is algebraically slice, then up to isotopy and orientation change, there are exactly two essential simple closed curves on $F$, $J_1$ and $J_2$, with self-linking 0 with respect to the Seifert form of $F$. In this situation, we will call $J_1$ and $J_2$ surgery curves for $F$. Conjecturally, if $K$ is smoothly slice, then one of $J_1$ or $J_2$ is necessarily  smoothly slice; see  \cite[Strong Conjecture, page 226]{Ka}, for instance.  

Shortly after Casson and Gordon~\cite{CG1}~developed obstructions to  slicing algebraically slice knots, it was noticed that Casson-Gordon invariants could be expressed in terms of signature invariants  of curves on Seifert surfaces~\cite{G1,lith}. Moreover,  Casson-Gordon invariants could be interpreted in this way as  obstructions to slicing $K$ by slicing a surgery curve on a genus one Seifert surface for $K$.  Casson-Gordon invariants actually obstruct topological locally flat slice disks.

 A genus one knot $K$ is algebraically slice if   and only if it has  an Alexander polynomial of the form
 \begin{align*}\Delta_K(t)& = (m t-(m+1) )((m+1)t -m )\\ &= m(m+1)t^2 -( m^2+(m+1)^2)t +m(m+1) \end{align*}
 for some $m \ge 0$. 
Observe that if $\Delta_K$ has the form above,  then the non-negative integer $m$ is determined. For a genus one algebraically slice knot $K$, let  $m(K)$ denote this number; note that the determinant of $K$ is $(2m(K)+1)^2.$

We let $\sigma_K(t)$ denote the Levine-Tristram~\cite{levine,tristram} signature function of $K$, as defined on the unit interval $[0,1]$ and redefined to be the average of the one-sided limits at the jumps.   Casson-Gordon theory implies that if a genus-one knot $K$ is slice 
and $m(K) \ne 0$, then the signature function of one of the surgery curves satisfies strong constraints.  To state these, we make the following definition.

 \begin{definition}\label{sigconddef}  A knot $J$ satisfies the   $(m,p)$--signature conditions for integers $m>0$ and $p$ relatively prime to $m$ and $m+1$, if 
 $$\sum_{i=0}^{r-1} \sigma_{J}( ca^i/p) = 0$$
for all     
$c \in {\zz_p}^*$,
  and  $a =  \frac{m+1}{m} \mod p$,  where $r$ is the order of $a$ modulo $p$.
 \end{definition}

To get a feeling  for this summation, consider the case of $m(K)=1$ and  
$p = 73$.   In $\zz_{73}$,  the number 2 generates the multiplicative subgroup $\{1,2,4,8,16,32,64,55,37\}$.  This subgroup has 
8 cosets in the group of units $(\zz_{73})^*$.
For instance the coset containing $c = 5$ is  $\{5,7,10,14,20,28, 39,40,56  \}$.
Thus the following arises as one of the sums in the $(1,73)$--signature condition: 
$$\sigma_J(\frac{ 5}{73})  + \sigma_J(\frac{7}{73})  +\sigma_J(\frac{10}{73} )+\sigma_J(\frac{14}{73} ) +\sigma_J(\frac{20}{73})  +\sigma_J(\frac{28}{73})  +\sigma_J(\frac{39}{73}) +\sigma_J(\frac{40}{73}) + \sigma_J(\frac{56}{73}) .$$ Notice  that the cosets  appear to be fairly randomly distributed in the unit interval.  Nonetheless, as we show,  the vanishing of all such sums is not sufficient to imply the vanishing of the signature function itself.
Consider the following simple  consequence  of Theorem~\ref{thmwitt1} below.

 \begin{theorem}\label{cgcooperthm} Let $K$ be  a genus one smoothly slice knot, then  one of the surgery curves $J$ satisfies $(m(K),p)$--signature conditions for an infinite set of primes $p$.
 \end{theorem}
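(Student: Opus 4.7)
The plan is to derive Theorem~\ref{cgcooperthm} as a consequence of Theorem~\ref{thmwitt1} combined with the Gilmer--Litherland expression of Casson--Gordon signatures in terms of Levine--Tristram signatures of curves on a Seifert surface. Fix a prime $p$ coprime to $m(m+1)$, so that $a = (m+1)/m$ is well defined in $\zz_p^*$. Under this hypothesis, $H_1(\Sigma_p(K);\zz)$ is a finite module whose structure is controlled by $\Delta_K$: lifts of the two surgery curves $J_1, J_2$ generate this homology, and the deck transformation acts by multiplication by $a$ on one eigenspace and by $a^{-1}$ on the other. The orbits of multiplication by $a$ on $\zz_p^*$ are precisely the cosets that index the sums in Definition~\ref{sigconddef}.

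Casson--Gordon theory, specialized to the slice hypothesis, supplies a metabolizer $L \subset H_1(\Sigma_p(K);\zz)$ for the linking form such that the Casson--Gordon invariants $\tau(K,\chi)$ vanish for prime-power characters $\chi$ factoring through $H_1(\Sigma_p(K);\zz)/L$. Theorem~\ref{thmwitt1} is the Witt-theoretic formulation of this obstruction, and I would invoke it to show that in the genus-one setting the only metabolizers that can arise are the cyclic submodules generated by a lift of $J_1$ or of $J_2$. Applying the Gilmer--Litherland calculation of $\tau(K,\chi)$ on a genus-one Seifert surface, the relevant signature associated to the character dual to a lift of $J_i$ and an element $c \in \zz_p^*$ reduces to $\sum_{i=0}^{r-1} \sigma_{J_i}(ca^i/p)$, where $r$ is the order of $a$ modulo $p$, with correction terms vanishing because $K$ is algebraically slice. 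Its vanishing for every $c$ is exactly the $(m,p)$--signature condition for $J_i$.

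The passage to an infinite set of primes is a pigeonhole argument: all but finitely many primes are coprime to $m(m+1)$, and for each such $p$ the preceding analysis shows that at least one of $J_1, J_2$ satisfies the $(m,p)$--signature condition; hence one of them satisfies it for an infinite subset of primes, and that curve is the desired $J$. The main obstacle is the rigidity step underlying Theorem~\ref{thmwitt1}, which must rule out exotic metabolizers that mix the two eigenspaces of the deck transformation, so that the metabolizer for each $p$ can be identified with a lift of a single surgery curve rather than a diagonal submodule. This algebraic content, together with verifying that the surface-signature formula specializes precisely to the sums in Definition~\ref{sigconddef}, is the heart of the argument.
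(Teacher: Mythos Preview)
Your top-level plan---deduce Theorem~\ref{cgcooperthm} from Theorem~\ref{thmwitt1} together with the observation that the $(m,p)$--Witt conditions imply the $(m,p)$--signature conditions---is exactly what the paper does, and the deduction is much shorter than you make it.  Theorem~\ref{thmwitt1} already hands you a \emph{single} surgery curve $J$ (not a choice that varies with $p$) satisfying the $(m,p)$--Witt conditions for every $p$ in the infinite set described there; passing to signatures finishes the proof.  No pigeonhole step is needed.

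Where the proposal goes wrong is in your account of what Theorem~\ref{thmwitt1} actually says and why.  You describe the key step as a rigidity statement: ``the only metabolizers that can arise are the cyclic submodules generated by a lift of $J_1$ or of $J_2$.''  That is false in general---the linking form on $H_1(S_q)_{(p)}$ can have many invariant metabolizers---and it is not how Theorem~\ref{thmwitt1} is proved.  The real mechanism is Theorem~\ref{sim}: the slice disk $\Delta$ determines a $3$--manifold $R$ with $\partial R = F\cup\Delta$, and the kernel $H$ of $H_1(F)\to H_1(R)/\Tors(H_1(R))$ is a metabolizer of the Seifert form generated by \emph{one particular} surgery curve $J$.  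For every prime $p$ not dividing $|\Tors(H_1(R))|$, Theorem~\ref{sim} identifies $\kappa(\Delta,q)_{(p)}$ with the subgroup $\mathbb{Y}_{(p)}$ coming from this $H$.  This is a geometric identification of the slice metabolizer, not an algebraic classification of all metabolizers; the finite set of bad primes in Theorem~\ref{thmwitt1} is exactly the set of prime divisors of $|\Tors(H_1(R))|$.  (The rigidity argument you sketch is closer in spirit to the proof of Theorem~\ref{thmwitt2}, which applies only to special primes $p$.)

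Two smaller points.  First, you conflate the degree of the branched cover with the order of the character: in the Casson--Gordon setup one takes the $q$--fold cover $S_q$ for a prime power $q$ and then $p$--primary characters on $H_1(S_q)$; the $p$ in the $(m,p)$--conditions is the character order, and one needs $p\mid (m+1)^q-m^q$ for some prime power $q$ in order to have any such characters.  Second, and consequently, your claim that the analysis applies to ``all primes $p$ coprime to $m(m+1)$'' is too strong; the relevant set of primes is the one displayed in Theorem~\ref{thmwitt1}, and one must separately observe that it is infinite.
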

\noindent In his unpublished thesis~\cite{cooper}, Cooper in fact  stated a stronger result. 

 \begin{theorem}\label{cooperthm} Let $K$ be  a genus one smoothly slice knot, then  one of the surgery curves $J$ satisfies the  $(m(K),p)$--signature conditions for all $p$ relatively prime to $m$ and $m+1$.
 \end{theorem}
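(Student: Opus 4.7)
The plan is to apply Casson--Gordon theory to characters on the Alexander module of $K$ that vanish on the metabolizer induced by a slice disk. First, I would set up the algebraic picture: since $K$ is algebraically slice of genus one, the Blanchfield form on the Alexander module $A_K \cong \zz[t,t^{-1}]/\Delta_K(t)$ admits exactly two metabolizers $L_1, L_2$, corresponding to the surgery curves $J_1, J_2$. A smooth slice disk $D$ for $K$ determines a further metabolizer $L_D$, namely the kernel of the map from $A_K$ to the homology of the infinite cyclic cover of $B^4 \setminus N(D)$. Since only two metabolizers exist, $L_D \in \{L_1, L_2\}$, and I would take $J$ to be the surgery curve corresponding to $L_D$. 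This choice of $J$ is made once and for all, independently of $p$.

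Next, for each $p$ coprime to $m$ and $m+1$ and each $c \in \zz_p^*$, I would construct a character $\chi_{p,c} \co A_K \to \cc^*$ of order $p$ that factors through $A_K / L_D$. This is possible because $a \equiv (m+1)/m \pmod p$ is a unit and the reduction of $A_K/L_D$ modulo $p$ is a cyclic $\zz_p$-module on which $t$ acts by multiplication by $a$; hence order-$p$ characters are parametrized by $c \in \zz_p^*$. The formula of Litherland and Gilmer expressing Casson--Gordon invariants of genus-one knots in terms of signatures of surgery curves would then identify
$$\tau(K, \chi_{p,c}) \;=\; \sum_{i=0}^{r-1} \sigma_J\!\left(\frac{c a^i}{p}\right),$$
where $r$ is the multiplicative order of $a$ modulo $p$ and the sum runs over the Galois orbit of $\zeta_p^c$ under $\zeta \mapsto \zeta^a$.

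Finally, the Casson--Gordon vanishing theorem, applied to the character $\chi_{p,c}$, which vanishes on $L_D$ by construction, would give $\tau(K, \chi_{p,c}) = 0$, and combining with the signature formula yields the $(m(K), p)$--signature condition. The hard part is precisely this last step for general $p$: the classical Casson--Gordon vanishing is stated for characters of prime-power order, which is what immediately produces Theorem~\ref{cgcooperthm}. To extend to arbitrary $p$ one must carry out a direct twisted-signature computation on the infinite cyclic cover of $B^4 \setminus N(D)$ using the flat $\cc$-bundle determined by $\chi_{p,c}$, without decomposing $\chi_{p,c}$ into prime-power pieces. The delicate point is verifying that the relevant twisted signatures of the slice-disk exterior vanish for these composite-order characters, and that the Litherland--Gilmer signature formula itself continues to hold in this generality; this is presumably the technical core of Cooper's unpublished argument.
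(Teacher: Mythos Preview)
The paper does not give its own proof of this theorem; it is attributed to Cooper's unpublished 1982 Warwick thesis, and the paper explicitly remarks that ``Casson--Gordon theory provides a somewhat weaker version of Cooper's theorem'' and that ``Cooper worked in the smooth category.'' So there is no proof in the paper to compare against, but one can compare your proposal to what the paper says Casson--Gordon theory can and cannot do.

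Your argument has a genuine gap at the sentence ``Since only two metabolizers exist, $L_D \in \{L_1,L_2\}$.'' Identifying the slice-disk metabolizer with one of the two metabolizers coming from the surgery curves on $F$ is exactly the content of the paper's Theorem~\ref{sim}, and that result only holds for primes $p$ not dividing $|\Tors(H_1(R))|$, where $R$ is a 3--manifold in $B^4$ bounded by $F\cup\Delta$. The paper stresses that the earlier unrestricted version of this identification (Gilmer's \cite[Theorem~1]{G2}) had a gap found by Friedl; the corrected statement forces the exclusion of a finite set of bad primes. This is precisely why the Casson--Gordon route yields only Theorem~\ref{cgcooperthm} / Theorem~\ref{thmwitt1} and not Theorem~\ref{cooperthm}. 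Your appeal to the Blanchfield form does not sidestep this: the assertion that $A_K$ is cyclic with exactly two $\Lambda$--metabolizers is itself unproved, and even in the branched-cover picture where one can sometimes show there are only two metabolizers (cf.\ the proof of Theorem~\ref{thmwitt2}), extra hypotheses on $p$ are needed.

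You correctly flag the prime-power restriction in the Casson--Gordon vanishing theorem as a second obstacle, but you misattribute its resolution to Cooper. According to the paper, Cooper's argument is a direct smooth-category signature computation that does not pass through Casson--Gordon invariants at all, so neither the prime-power issue nor the bad-primes issue arises in his approach. Your outline is therefore not a sketch of Cooper's proof but rather a sketch of why Casson--Gordon methods fall short of it.
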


One quick corollary, first observed by Cooper, of either of these theorems is that for a genus one slice knot $K$ with $m(K) >0$, the integral of the signature function of one of the slice curves $J$ is 0.  This follows by summing the signature sums in the theorem over all values of $c$ to get a sum of the form $\sum_{i=1}^{p-1} \sigma_{J}( i/p) = 0$ and then noting that for large $p$, this sum approximates the integral.  (This integral condition was later seen to follow from the $L^2$-signature approach of~\cite[Theorem(1.4)]{COT}.)

Clearly, the  constraints given by these theorems are quite extensive.  One explicit question asked by Cooper is whether the fact the combined sum $\sum_{i=1}^{p-1} \sigma_{J}( i/p) = 0$ for the appropriate infinite sets of $p$ implies the vanishing of the signature function~\cite[Question (3.16)]{cooper}.   We will show that the answer is no.  In fact, the much stronger constraints given in Theorems~\ref{cgcooperthm} and~\ref{cooperthm} are not sufficient to imply the vanishing of the signature function of one of the surgery curves.  Here is the algebraic formulation of the question.

\vskip.1in

\begin{ques} Let $\sigma$ be an integer-valued step function defined on $[0,1]$ with the property that $\sigma(x) = \sigma(1-x)$ for all $x$. Assume also $\sigma(0)=\sigma(1)=0$, that  there are no jumps at points with denominator a prime power, and that $\sigma$ is equal to the average of the one-sided limits at the jumps.  Suppose that for all    $p>1$   coprime to $m$ and $m+1$, for  $G$   the multiplicative subgroup of $(\zz_p)^*$ generated by $\frac{m+1}{m}$,   and for all $n \in \zz_p$,   $$\sum_{r \in nG} \sigma (r/p) = 0.$$  Then does $\sigma(t) = 0$ for all $t$?
\end{ques}
 
For each $m>0$, the answer to the above question is emphatically no.  Let $K_{(r,s)}$ denote the $(r,s)$--cable of $K$ (that is,  $r$ longitudes, and $s$ meridians).  Let $ - K$ denote the mirror image of $K$. 
 
\begin{theorem} \label{shifted1}  Let $K$ be a knot with a non-zero signature function, and $m>0$. 
The signature function of $K_{(m,1)} \#-{K_{(m+1,1)}}$ is non-zero and satisfies the   $(m,p)$--signature conditions for all $p$ relatively prime to $m$ and $m+1$.
 \end{theorem}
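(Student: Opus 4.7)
The plan is to invoke Litherland's satellite formula to express the signature functions of the two cables in terms of $\sigma_K$, after which both halves of the conclusion reduce to elementary identities for $\sigma_K$ viewed as a periodic function on $\rr/\zz$. The $(r,1)$--cable is the satellite of $K$ with pattern the unknotted torus knot $T(r,1)$ sitting in the standard solid torus with winding number $r$, so Litherland's formula specializes to $\sigma_{K_{(r,1)}}(t) = \sigma_K(rt)$, with argument interpreted mod $1$. Additivity of the Levine--Tristram signature under connected sum, together with $\sigma_{-L} = -\sigma_L$, then gives
\[
\sigma_{K_{(m,1)}\#\,-K_{(m+1,1)}}(t) \;=\; \sigma_K(mt) - \sigma_K((m+1)t).
\]

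To verify the $(m,p)$--signature condition for $p$ coprime to $m$ and $m+1$, fix $c \in (\zz_p)^*$ and let $a \equiv (m+1)m^{-1} \pmod p$ have order $r$. The key congruence $(m+1)\,c\,a^i \equiv m\,c\,a^{i+1} \pmod p$ gives $\sigma_K((m+1)ca^i/p) = \sigma_K(mca^{i+1}/p)$, hence
\[
\sum_{i=0}^{r-1}\bigl[\sigma_K(mca^i/p) - \sigma_K((m+1)ca^i/p)\bigr]\;=\;\sum_{i=0}^{r-1}\bigl[\sigma_K(mca^i/p) - \sigma_K(mca^{i+1}/p)\bigr]\;=\;0,
\]
by telescoping and $a^r \equiv 1 \pmod p$. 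For non-vanishing I would argue by contradiction: if $\sigma_K(mt) = \sigma_K((m+1)t)$ on $\rr/\zz$, then the substitution $u = (m+1)t$ yields $\sigma_K(u) = \sigma_K(mu/(m+1))$ for $u\in[0,1]$, with no modular reduction needed on the right. Iterating gives $\sigma_K(u) = \sigma_K\bigl((m/(m+1))^k u\bigr)$ for every $k\ge 1$; since $(m/(m+1))^k u \to 0$ and $\sigma_K$ is a step function with no jump at $0$, this forces $\sigma_K \equiv 0$, contradicting the hypothesis on $K$.

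The main obstacle is the non-vanishing step. The telescoping in the signature-condition calculation is mechanical once Litherland's formula is in hand, but for the non-vanishing one must be careful to check that vanishing of the connected-sum signature function, interpreted under the paper's averaging convention at jumps, really gives the relation $\sigma_K(mt) = \sigma_K((m+1)t)$ in a form strong enough to iterate. Concretely, the equality holds at every continuity point, hence almost everywhere, and the relation $\sigma_K(u) = \sigma_K((m/(m+1))^k u)$ then holds almost everywhere in $u$; choosing a $u$ in the (full-measure) set where the relation holds for all $k$ simultaneously, the eventual collapse to a neighborhood of $0$ where $\sigma_K$ vanishes gives $\sigma_K = 0$ almost everywhere, and the averaging convention upgrades this to $\sigma_K \equiv 0$.
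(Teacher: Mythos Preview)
Your proof is correct and follows the same approach as the paper: the telescoping argument for the $(m,p)$--signature conditions is exactly Lemma~\ref{shift} applied to $S=\{ca^i/p\}$ (your congruence $(m+1)ca^i\equiv mca^{i+1}\pmod p$ is the verification that $\phi_m(S)=\phi_{m+1}(S)$), and both rely on the satellite formula $\sigma_{K_{(r,1)}}(t)=\sigma_K(rt)$. Your contraction argument for non-vanishing (iterating $u\mapsto \tfrac{m}{m+1}u$ into a neighborhood of $0$ where $\sigma_K$ vanishes) is a detail the paper's outline leaves to the reader.
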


We have a  perhaps nicer   family to work with in the case $m=1.$ Let $T_{r,s}$ denote the $(r,s)$--torus knot, which is the 
$(r,s)$--cable of the unknot. 

\begin{theorem} \label{shifted1'}  If $r$ is an odd number,  
and $ r \ge 3$, the signature function of $(T_{2,r})_{(2,-r)}$ is non-zero and satisfies the  $(1,p)$--signature conditions for $p$ odd.
 \end{theorem}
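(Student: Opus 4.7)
The plan is to compute the signature function of $J=(T_{2,r})_{(2,-r)}$ in closed form using Litherland's satellite formula for signatures, and then observe that the $(1,p)$-signature sums telescope. Litherland's theorem gives, for any knot $K$ and coprime integers $p,q$, the identity
$$\sigma_{K_{(p,q)}}(\omega)=\sigma_{T_{p,q}}(\omega)+\sigma_K(\omega^{p}).$$
Taking $K=T_{2,r}$ and $(p,q)=(2,-r)$, and using that $T_{2,-r}$ is the mirror of $T_{2,r}$ (so $\sigma_{T_{2,-r}}=-\sigma_{T_{2,r}}$), one obtains
$$\sigma_J(t)=\tau(\{2t\})-\tau(t),$$
where $\tau:=\sigma_{T_{2,r}}$ and $\{x\}$ denotes the fractional part; the symmetry $\tau(1-t)=\tau(t)$ is used to interpret the argument modulo $1$.

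To check the $(1,p)$-signature condition for an odd integer $p$, note that $(m+1)/m=2$ when $m=1$, and let $k$ be the multiplicative order of $2$ modulo $p$. For any $c\in(\zz_{p})^{*}$,
\begin{align*}
\sum_{i=0}^{k-1}\sigma_J(2^{i}c/p)
&=\sum_{i=0}^{k-1}\bigl[\tau(\{2^{i+1}c/p\})-\tau(\{2^{i}c/p\})\bigr]\\
&=\tau(\{2^{k}c/p\})-\tau(c/p)=0,
\end{align*}
the last equality following from $2^{k}c\equiv c\pmod{p}$. The sum telescopes precisely because the exponent $2$ appearing in Litherland's formula (from the $2$-cable) matches the generator $a=(m+1)/m=2$ of the subgroup $G$ in Definition~\ref{sigconddef}. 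Note that the convention that $\tau$ is the average of one-sided limits at jumps does not interfere: the identical value is subtracted off at the end of the telescope.

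For the non-vanishing claim, recall that for odd $r\ge 3$ the function $\tau=\sigma_{T_{2,r}}$ is a non-positive step function on $[0,1]$, vanishing near the endpoints and attaining $-(r-1)$ at $t=1/2$, with jumps only at the points $(2j+1)/(2r)$ with $j\ne(r-1)/2$. Choosing $t$ slightly below $1/2$ gives $\tau(t)=-(r-1)$ while $\{2t\}$ is slightly below $1$, so $\tau(\{2t\})=0$; hence $\sigma_J(t)=r-1>0$.

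The only real work is importing Litherland's signature formula for cables and recording the explicit shape of $\sigma_{T_{2,r}}$; once these are in hand, both the telescoping identity and the non-vanishing are immediate, and no Casson--Gordon machinery is needed for this half of the construction.
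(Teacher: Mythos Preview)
Your proof is correct and follows essentially the same route as the paper: both use Litherland's satellite signature formula to write $\sigma_J(t)=\tau(2t)-\tau(t)$ with $\tau=\sigma_{T_{2,r}}$, and then observe that the $(1,p)$--sum over the orbit $\{c\cdot 2^i/p\}$ cancels---you phrase this cancellation as a telescope, while the paper packages it as the set identity $\phi_2(S)=S$ in Lemma~\ref{shift}, but these are the same computation. Your explicit evaluation near $t=1/2$ for non-vanishing is a detail the paper leaves implicit (it only displays the graph in Figure~\ref{sigma-function}).
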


\begin{figure}\fig{.4}{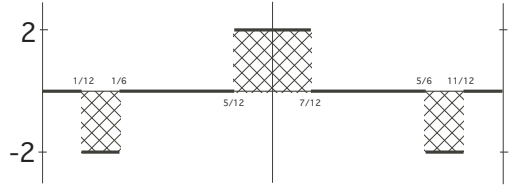}
\caption{Signature Function of $(T_{2,3})_{(2,-3)}$ which satisfies the    $(1,p)$--signature conditions for $p$ odd.} \label{sigma-function}
\end{figure}

Although  
Casson-Gordon theory provides a somewhat weaker   version of Cooper's theorem, it provides access to
the more powerful 
Witt class analogs of Theorem~\ref{cgcooperthm}, 
which carry more information than given by signatures.
Also, Casson-Gordon theory obstructs topological sliceness, whereas Cooper worked in the smooth category.
  We now describe these Witt class invariants.

 If $K$ is a knot,  let 
 $ V_t = (1-t)V+ (1-t^{-1}){V}^t$, where      $V$ is a Seifert matrix of $K$  and  $t$ is an indeterminant.   For $p$ a prime power and $j/p \in  \zz[\frac{1}{p}]/\zz $, let  $w_K(\frac{j}{p})$ denote the element represented by  $V_{e^{2 \pi i j/p}}$ in   $W( \qq(\zeta_p) )\otimes \zz_{(2)}$.  Here 
 $W( \qq(\zeta_p) )$ denotes the Witt group of hermitian forms over the field $ \qq(\zeta_p)$ and $\zz_{(2)}$ denotes $\zz$ localized at two.  An elementary proof shows that this defines a homomorphism on the concordance group.
 
\begin{definition}\label{witdef}  We say a knot $J$ satisfies the $(m,p)$--Witt conditions for integers $m>0$, $p$ relatively prime to $m$ and $m+1$, if 
 
$$\sum_{i=0}^{r-1} w_{J}[\frac {c+a i} p]=0 \in  W( \qq(\zeta_p) )\otimes \zz_{(2)},$$
for all   $c \in {\zz_p}^*$,   $a =  \frac{m+1}{m} \mod p$,  and
$r$   the order of $a$ modulo $p$.
\end{definition} 
 
If a knot $J$ satisfies the $(m,p)$--Witt conditions then it satisfies as well the $(m,p)$--signature conditions.  But the Witt conditions are stronger. For instance, one can define a discriminant invariant   on $W( \qq(\zeta_p) )\otimes \zz_{(2)}$ which is discussed in~\cite{GL2}. 

 \begin{theorem}\label{thmwitt1} Let $K$ be a genus one topologically slice knot.  There is some finite set of bad primes $P$ such  that  one of the surgery curves $J$ satisfies the $(m(K),p)$--Witt conditions  for all $p$ in the set 
$$  \{ r^n\  |\  n \in \zz_+,  r \text{ is  prime, } r \notin P, r^n \text{ divides }  (m+1)^q-(m)^q  \text{ for some prime power $q$} \}.$$
 \end{theorem}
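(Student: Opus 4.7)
My plan is to derive Theorem~\ref{thmwitt1} by combining a Witt-class refinement of the Casson-Gordon obstruction to topological sliceness with a direct analysis of the Alexander-module structure forced by the genus one hypothesis.

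The starting point is the topological sliceness of $K$, from which Casson-Gordon theory (in its Witt-class refinement developed in \cite{GL2}) produces, for each prime power $q$, a metabolizer $M_q$ for the $\qq/\zz$-linking form on $H_1(\Sigma_q(K))$ that is invariant under the $\zz_q$ deck action and has the following property: for every prime $r$ and every character $\chi : H_1(\Sigma_q(K)) \to \zz_{r^n}$ vanishing on $M_q$, the Witt-valued Casson-Gordon invariant $\tau(K,\chi)\in W(\qq(\zeta_{r^n}))\otimes\zz_{(2)}$ vanishes.

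The next step is to use the genus one hypothesis to decompose $H_1(\Sigma_q(K))$. Choosing a symplectic basis for $H_1(F)$ so that the two surgery curves $J_1, J_2$ represent the basis elements, the Seifert matrix is block-anti-diagonal with entries $m$ and $m+1$, and the rational Alexander module splits as
\[
H_1(X_\infty;\qq) \cong \qq[t,t^{-1}]/(mt-(m+1)) \oplus \qq[t,t^{-1}]/((m+1)t-m),
\]
with the two factors supported by $J_1$ and $J_2$ respectively, mutually dual and self-annihilating under the Blanchfield pairing. Reducing modulo $t^q-1$ yields an analogous decomposition of $H_1(\Sigma_q(K))$, whose $r$-primary parts are nonzero precisely when $r$ divides $(m+1)^q - m^q$. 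I let $P$ consist of $2$, the primes dividing $m(m+1)$, and any additional finite set needed so that away from $P$ the $r$-primary linking form splits as the orthogonal direct sum of two self-annihilating summands dual to each other, for every prime power $q$. For such $r$, the $r$-primary part of $M_q$ is $\zz_q$-invariant and self-annihilating, and the local structure forces it to coincide with exactly one of those two summands; using the compatibility induced by the tower maps $\Sigma_{q'}(K) \to \Sigma_q(K)$ for $q\mid q'$, I can arrange that a single surgery curve $J \in \{J_1,J_2\}$ has $M_q$ equal to the summand dual to $J$ on the entire $r^n$-primary part, for every allowable $r^n$ and $q$.

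The principal technical step, and the main obstacle, is to identify the Witt-valued Casson-Gordon invariant with an explicit Galois-orbit sum of Witt classes of $J$. For a character $\chi$ of order $r^n$ corresponding to a unit $c\in\zz_{r^n}^*$ in the summand dual to $J$ (hence vanishing on $M_q$), one expresses $K$ via a surgery description obtained by cutting along $J$ with its zero framing, and computes $\tau(K,\chi)$ from this presentation. The deck action permutes such characters by multiplication by $a \equiv (m+1)/m \bmod r^n$, and with $\rho$ denoting the order of $a$ modulo $r^n$ the expected identity is
\[
\tau(K,\chi) \;=\; \sum_{i=0}^{\rho-1} w_J\!\left[\frac{c\, a^i}{r^n}\right] \;\in\; W(\qq(\zeta_{r^n})) \otimes \zz_{(2)};
\]
this generalizes the signature-level computations of \cite{G1,lith} to the Witt setting. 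Combining this identity with the Casson-Gordon vanishing gives the $(m(K),r^n)$-Witt condition for every choice of $c$, which is the conclusion of the theorem.
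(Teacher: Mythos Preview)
Your architecture is right, but the step where you pin down the metabolizer has a genuine gap.  The claim that ``the local structure forces $M_q$ to coincide with exactly one of those two summands'' is false for prime powers $r^n$ with $n>1$.  Even granting that the $r$-primary part of $H_1(\Sigma_q(K))$ splits as $\zz_{r^n}\oplus\zz_{r^n}$ with the deck transformation acting by the distinct eigenvalues $a$ and $a^{-1}$, a hyperbolic linking form on that group has $n+1$ deck-invariant metabolizers, namely $r^k\zz_{r^n}\oplus r^{\,n-k}\zz_{r^n}$ for $0\le k\le n$, not just the two eigenspaces.  Your argument is valid only when the $r$-primary part is $\zz_r\oplus\zz_r$; that is exactly the extra hypothesis $\gcd(p^2,(m+1)^q-m^q)=p$ appearing in Theorem~\ref{thmwitt2}, and it is why that theorem is stated separately.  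Relatedly, your definition of $P$ as ``any additional finite set needed so that\ldots'' is not accompanied by a proof that a single finite set works uniformly over all prime powers $q$, and your tower-map argument for choosing one $J$ consistently is too sketchy to carry the weight you place on it.

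The paper's route around this is Theorem~\ref{sim}: one picks a $3$--manifold $R\subset B^4$ with $\partial R=F\cup\Delta$, sets $H=\ker\bigl(H_1(F)\to H_1(R)/\Tors(H_1(R))\bigr)$, and shows that for every prime $p\nmid|\Tors(H_1(R))|$ the $p$--primary Casson--Gordon metabolizer equals the summand $\mathbb{Y}_{(p)}$ determined by $H$.  Thus the bad primes are simply the prime divisors of $|\Tors(H_1(R))|$, and a single surgery curve $J$ (the one spanning $H$) is selected geometrically once and for all, with no tower argument needed.  For the final computation the paper also proceeds differently from your direct identity: it ties $-J$ into the $J$--band of $F$ to produce a knot $K'$ whose surgery curve $J\#(-J)$ is slice; the associated $R'$ is a solid torus, so Theorem~\ref{sim} applies with \emph{no} bad primes and all the relevant $\tau(K',\chi)$ vanish.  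The known band-tying formula then gives $\tau(K,\chi)-\tau(K',\chi)=\sum_i w_J[ca^i/p]$, which yields the Witt condition.  Your direct formula $\tau(K,\chi)=\sum_i w_J[ca^i/r^n]$ is implicitly relying on exactly this vanishing of the ``untied'' contribution, which you have not justified.
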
 
 
 Consider $Wh(J,n)$,  
  the $n$--twisted Whitehead double of $J.$ It is well-known that this knot is algebraically slice if and only if  $n =m(m+1)$. Moreover $m(Wh(J,m(m+1) ))= m$. It is also known that the two surgery curves for $Wh(J,m(m+1) )$  both have the isotopy type of $J \# T_{(m,m+1)}$. One can see this using the techniques discussed in~\cite[pages 214--223]{Ka}.     Using this fact, for these knots one can  sometimes remove the exceptions created by the unknown set of bad primes.
 
  \begin{theorem}\label{thmwitt2}  Let $m > 0$. If \ $Wh(J, m(m+1))$ is topologically slice,  
  then $J \#T_{(m,m+1)}$ satisfies  the $(m,p)$--Witt  conditions for all  $p$  
in the set: 
$$  \{ p\ |\  p \text{ is a prime, }  \gcd(p^2 ,  (m+1)^q-(m)^q)=p \text{ for some odd prime power $q$} \}.$$
 \end{theorem}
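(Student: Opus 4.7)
The strategy is to reduce Theorem~\ref{thmwitt2} to Theorem~\ref{thmwitt1} by exploiting the fact, recalled just above the statement, that both surgery curves on the standard genus one Seifert surface for $Wh(J, m(m+1))$ are isotopic to $J \# T_{(m,m+1)}$. Because the two surgery curves coincide as knots, there is no ambiguity in the ``one of the surgery curves'' clause of Theorem~\ref{thmwitt1}; applied to $K = Wh(J, m(m+1))$, that theorem produces a finite set of bad primes $P = P(K)$ such that $J \# T_{(m,m+1)}$ satisfies the $(m,p)$--Witt conditions whenever $p \notin P$ and $p$ divides $(m+1)^q - m^q$ for some prime power $q$. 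The remaining task is to verify that every prime $p$ meeting the additional hypothesis $\gcd(p^2, (m+1)^q - m^q) = p$ for some odd prime power $q$ automatically lies outside $P$.

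To accomplish this, I would revisit the construction of $P$ in the proof of Theorem~\ref{thmwitt1}. In the Witt-theoretic Casson-Gordon machinery, bad primes arise as those at which the obstruction degenerates, typically primes dividing the order of torsion in $H_1$ of certain covers of the slice-disk exterior, or primes at which the Alexander module of $K$ acquires unexpected $p$-torsion after reduction. For the Whitehead double $K = Wh(J, m(m+1))$ the Alexander module is cyclic with relator $(mt-(m+1))((m+1)t-m)$, so its $p$-local invariants are controlled entirely by the valuations of $\mathrm{Res}(\Delta_K(t), t^q - 1)$, a quantity that unravels in terms of $(m+1)^q - m^q$. The valuation-one condition $\gcd(p^2, (m+1)^q - m^q) = p$ is precisely the input needed to ensure that the relevant module is $p$-locally a nondegenerate rank-one form, ruling out $p$ as bad; the restriction to odd prime powers $q$ handles the residual $2$-adic considerations that arise in passage to $W(\qq(\zeta_p)) \otimes \zz_{(2)}$.

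The main obstacle is this last reduction: explicitly identifying which primes can contribute to $P$ for the Whitehead double, and matching that description against the hypothesis of Theorem~\ref{thmwitt2}. This is essentially a bookkeeping exercise within Casson-Gordon obstruction theory, but one requiring careful use of the cyclicity of the Alexander module of $Wh(J, m(m+1))$ together with a precise analysis of when the $p$-local invariants of the associated hermitian forms over $\qq(\zeta_p)$ vanish. Once this matching is in hand, the conclusion of Theorem~\ref{thmwitt2} follows immediately from Theorem~\ref{thmwitt1}.
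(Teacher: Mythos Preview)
Your reduction to Theorem~\ref{thmwitt1} does not work as stated, and the paper takes a genuinely different route.  The set $P$ of bad primes in Theorem~\ref{thmwitt1} arises from Theorem~\ref{sim}: it consists of the primes dividing $|\Tors(H_1(R))|$, where $R$ is a $3$--manifold in $B^4$ bounded by $F \cup \Delta$.  This depends on the slice disk $\Delta$, not just on the knot $K$.  Consequently there is no way to identify or bound $P$ using only the Alexander module of $Wh(J,m(m+1))$ or other knot-intrinsic data; your proposed ``bookkeeping exercise'' cannot get off the ground, because the object being bookkept is not determined by the knot.

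The paper avoids this obstacle by bypassing Theorem~\ref{sim} (and hence Theorem~\ref{thmwitt1}) entirely and appealing directly to Theorem~\ref{thmcg}, which has no bad primes.  The point of the hypothesis $\gcd(p^2,(m+1)^q-m^q)=p$ with $q$ an odd prime power is that it forces $H_1(S_q)_{(p)}$ to be a two-dimensional vector space over $\zz_p$.  The linking form on this space is then hyperbolic, with exactly two metabolizers: the eigenspaces of the deck transformation for the eigenvalues $(m+1)/m$ and $m/(m+1)$, which are precisely the metabolizers coming from the two surgery curves.  Since Theorem~\ref{thmcg} guarantees that $\kappa(\Delta,q)_{(p)}$ is \emph{some} invariant metabolizer, it must be one of these two, regardless of what $\Delta$ or $R$ look like.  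From there the computation of the Casson--Gordon invariants in terms of the surgery curve proceeds as in~\cite{G2}, and the fact that both surgery curves are $J\#T_{(m,m+1)}$ removes the remaining ambiguity.  So the $\gcd$ condition is not there to rule out bad primes of an unknown $R$; it is there to make the metabolizer classification so rigid that the identity of $R$ becomes irrelevant.
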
 

Our examples of knots satisfying  $(m,p)$--signature conditions also satisfy  Witt conditions.

\begin{theorem} \label{shiftedw}  For any knot $K$ and $m >0$, 
$K_{(m,1)}\# - K_{(m+1,1)}$   
  satisfies the 
$(m,p)$--Witt conditions for all $p$ relatively prime to $m$ and $m+1$. For any odd integer $n$, $(T_{2,n})_{(2,-n)}$ satisfies the 
$(1,p)$--Witt conditions for all odd $p$.\end{theorem}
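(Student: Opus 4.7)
The plan is to reduce both claims to a telescoping identity, executed in the Witt group $W(\qq(\zeta_p))\otimes \zz_{(2)}$, completely parallel to the signature calculation that underlies Theorems~\ref{shifted1} and~\ref{shifted1'}.  The essential input is a Witt-class refinement of Litherland's cabling formula: for a knot $K$ and coprime integers $u,v$,
\begin{equation}\label{eq:cable-witt-plan}
w_{K_{(u,v)}}[j/p] \;=\; w_K[uj/p] \;+\; w_{T_{u,v}}[j/p]
\end{equation}
in $W(\qq(\zeta_p))\otimes\zz_{(2)}$, valid for every prime power $p$ coprime to $u$.  This reflects the usual decomposition of a natural Seifert surface for $K_{(u,v)}$ as $u$ parallel copies of one for $K$ glued to a Seifert surface for the pattern torus knot $T_{u,v}$, the cross blocks bridging the two pieces being metabolic in the Witt group once $p$ and $u$ are coprime.

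Granting \eqref{eq:cable-witt-plan}, the first claim is immediate.  Since $T_{m,1}$ and $T_{m+1,1}$ are unknots, their Witt contributions vanish; additivity under connected sum and the sign change under mirror image then give, for $J=K_{(m,1)}\#-K_{(m+1,1)}$,
\[
w_J[j/p] \;=\; w_K[mj/p] \;-\; w_K[(m+1)j/p].
\]
With $a\equiv(m+1)/m\pmod p$ one has $(m+1)ca^i\equiv mca^{i+1}\pmod p$, so the $(m,p)$--Witt sum telescopes:
\[
\sum_{i=0}^{r-1} w_J[ca^i/p] \;=\; \sum_{i=0}^{r-1}\bigl(w_K[mca^i/p]-w_K[mca^{i+1}/p]\bigr) \;=\; w_K[mc/p]-w_K[mca^r/p] \;=\; 0,
\]
using $a^r\equiv 1\pmod p$.

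For the torus-knot family, apply \eqref{eq:cable-witt-plan} with companion $T_{2,n}$ and pattern $T_{2,-n}$; since $T_{2,-n}$ is the mirror of $T_{2,n}$, $w_{T_{2,-n}}=-w_{T_{2,n}}$, and
\[
w_{(T_{2,n})_{(2,-n)}}[j/p] \;=\; w_{T_{2,n}}[2j/p]\;-\;w_{T_{2,n}}[j/p].
\]
Taking $m=1$, so $a\equiv 2\pmod p$, the identical telescoping argument yields $\sum_{i=0}^{r-1} w_{(T_{2,n})_{(2,-n)}}[c\cdot 2^i/p]=0$ for every odd prime power $p$.

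The main obstacle is rigorously establishing \eqref{eq:cable-witt-plan} at the Witt-class level rather than only at the level of signatures, where Litherland's original argument~\cite{lith} suffices.  The upgrade requires identifying, inside a Seifert matrix of $K_{(u,v)}$, the companion block obtained from $V_K$ via the substitution $t\mapsto t^u$, and showing that the off-diagonal cross terms linking it to the pattern block represent a metabolic form in $W(\qq(\zeta_p))\otimes\zz_{(2)}$; the coprimality of $p$ and $u$ is precisely what makes the relevant companion blocks invertible there.  The argument is a standard extension of Litherland's calculation and is implicit in the Casson--Gordon analyses of cables used elsewhere in the paper and in~\cite{G1}.
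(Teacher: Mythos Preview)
Your overall strategy matches the paper's exactly: both reduce to a telescoping sum using a Witt-class satellite/cabling formula (your \eqref{eq:cable-witt-plan} is the paper's Theorem~\ref{companionthm}), and the telescoping computation you write out is precisely Lemma~\ref{shift} and its corollary specialized to these examples.

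Where you diverge is in the justification of \eqref{eq:cable-witt-plan}.  You locate the difficulty in ``off-diagonal cross terms linking [the companion block] to the pattern block,'' but in fact the Seifert matrix of a satellite is a \emph{direct sum} of a Seifert matrix for the pattern and one for the $(n,1)$--cable of the companion (Seifert~\cite{seifert}); there are no such cross terms.  The genuine obstacle is entirely internal to the companion piece: one must show $w_{C_{(n,1)}}(j/p)=w_C(nj/p)$.  Litherland~\cite{lith} gives the analogous Witt equivalence in $W(\qq(t))$, but as the paper emphasizes just before Theorem~\ref{companionthm}, the substitution $t\mapsto\zeta_p$ does \emph{not} define a map $W(\qq(t))\to W(\qq(\zeta_p))$, since representatives may have poles at $\zeta_p$.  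The paper's fix (Appendix~\ref{cableappendix}) is to pass through the localized Witt group $W(\qq[t,t^{-1}]_{(\phi_p)})$, where the substitution \emph{is} well-defined, and use injectivity of the inclusion into $W(\qq(t))$ to pull Litherland's identity back.  Your claim that this step is ``standard'' and ``implicit'' elsewhere is exactly what the paper declines to take for granted; it isolates this as the one nontrivial ingredient and devotes an appendix to it.
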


 In the next theorems, we focus on some particularly nice examples.
    
\begin{theorem}\label{mainthm}   Let $J= (T_{2,3})_{(2,-3)}$, the $(2,-3)$--cable of  trefoil knot $T_{2,3}$. Let   
$K = Wh(J ,2)$. 
\begin{enumerate}
\item   $K$ is a genus one algebraically slice knot with both surgery curves having the same knot type:  
$J$. 
\item  $J$ satisfies the  $(1,p)$--Witt conditions for all odd $p$. In particular $J$ satisfies the  $(1,p)$--signature conditions for all odd $p$.  Another consequence is that the constraints of Theorems~\ref{cooperthm},~\ref{thmwitt1}, and~\ref{thmwitt2} on $K$ are satisfied.
\item The signature function of $J$ is non-zero.
\item $\Delta_{J}(t)=(t^{-1}-1+t)(t^{-2}-1+t^2)$ does not satisfy the Fox-Milnor condition; that is, $\Delta_{J}
(t)$ cannot be written as $f(t)f(t^{-1})$ for $f(t) \in \zz[t,t^{-1}]$.
\item $\Arf{J}\ne 0$. 
\end{enumerate}
\end{theorem}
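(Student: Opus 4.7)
The plan is to prove the five items in order, with most being direct consequences of theorems already stated in the introduction together with standard Alexander polynomial computations.

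For (1), I would use the facts cited before Theorem~\ref{thmwitt2}: $Wh(J,n)$ is algebraically slice iff $n = m(m+1)$, in which case $m(Wh(J,n)) = m$ and both surgery curves on the genus one Seifert surface have knot type $J \# T_{m,m+1}$. Taking $m=1$ gives $n=2$ and $T_{1,2}$ is the unknot, so both surgery curves on $K = Wh(J,2)$ are isotopic to $J$ itself. For (2), the first claim is exactly Theorem~\ref{shiftedw} applied with $n=3$ (since $J = (T_{2,3})_{(2,-3)}$), and the remaining consequences (signature condition, and the hypotheses of Theorems~\ref{cooperthm}, \ref{thmwitt1}, \ref{thmwitt2} being satisfied by $K$) follow because the Witt conditions are stronger than the signature conditions and because the common surgery curve for $K$ is $J$. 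For (3), the non-vanishing of $\sigma_J$ is Theorem~\ref{shifted1'} applied with $r=3$.

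For (4), I would compute $\Delta_J$ using the standard cabling formula $\Delta_{K_{p,q}}(t) = \Delta_K(t^p)\cdot\Delta_{T_{p,q}}(t)$: this gives
\[
\Delta_J(t) = \Delta_{T_{2,3}}(t^2)\cdot \Delta_{T_{2,-3}}(t) = (t^{-2}-1+t^2)(t^{-1}-1+t),
\]
which matches the stated expression. The two factors are (up to units) the cyclotomic polynomials $\Phi_{12}(t)$ and $\Phi_{6}(t)$, each occurring to the first power. The Fox-Milnor condition $\Delta_J(t) = f(t)f(t^{-1})$ forces every irreducible symmetric factor of $\Delta_J$ over $\qq[t,t^{-1}]$ to appear with even multiplicity; since $\Phi_6$ and $\Phi_{12}$ are themselves symmetric (palindromic) and each appears with multiplicity one, no such factorization is possible.

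For (5), I would evaluate the Alexander polynomial at $-1$ and apply the classical result of Murasugi that $\Arf(J) \equiv 0$ (resp.\ $1$) mod $2$ according as $\Delta_J(-1) \equiv \pm 1$ (resp.\ $\pm 3$) mod $8$. From the formula in (4),
\[
\Delta_J(-1) = (1 - 1 + 1)(-1 - 1 - 1) = -3,
\]
which is $\equiv 5 \pmod 8$, giving $\Arf(J) = 1 \neq 0$. No step is a serious obstacle: the only place requiring genuine content is (2)-(3), and that content has been isolated into Theorems~\ref{shifted1'} and~\ref{shiftedw}; everything else is bookkeeping with Alexander polynomials and known elementary facts about Whitehead doubles and Arf invariants.
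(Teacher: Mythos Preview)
Your proposal is correct and matches the paper's approach: Theorem~\ref{mainthm} is not given a separate proof in the paper but is assembled from the other stated results exactly as you outline, with parts (1)--(3) reducing to the discussion before Theorem~\ref{thmwitt2} and to Theorems~\ref{shifted1'} and~\ref{shiftedw}, and parts (4)--(5) being the standard Alexander polynomial/Arf computations you give.
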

 
We do not know whether $Wh((T_{2,3})_{(2,-3)} ,2)$ is topologically locally-flat slice or smoothly slice. 
A conjecture made by  Kauffman \cite[Weak Conjecture, page 226]{Ka} ~\cite[Problem 1.52]{kirby} implies that 
 $Wh((T_{2,3})_{(2,-3)} ,2)$ is not smoothly slice since $\Arf((T_{2,3})_{(2,-3)}) \ne 0$.  Thus examples such as this one offer a route to possible counterexamples to this conjecture.

By modifying the example slightly (without changing the relevant signature function, Alexander polynomial,  Arf invariant  or even Witt class invariant), results of Hedden~\cite{he1, he2} on the Ozsv\'ath-Svab\'o invariant of cables and Whitehead doubles, obstructing sliceness becomes possible. This is described in the first part of the following theorem. We also give a second example of a knot with similar properties.

 \begin{theorem}\label{nsslice} 
 Let $J' = (T_{2,3}\#Wh(T_{2,3},0))_{(2,-3)}$.
   Then $K'= Wh(J',2)$  is not smoothly slice. Moreover the conclusions of Theorem~\ref{mainthm} hold when $K$ is replaced by $K'$ and $J$ is replaced by $J'$.
  
    Let $J''= (T_{2,3})_{(2,-3)} \# (T_{2,3})_{(2,-3)}$.  Then $K''= Wh(J'',2)$  is not smoothly slice. Moreover  conclusions  (1), (2), and (3) of Theorem~\ref{mainthm}  hold when $K$ is replaced by $K''$ and $J$ is replaced by $J''$.
 \end{theorem}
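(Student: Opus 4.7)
The plan splits into verifying the claimed algebraic conclusions of Theorem~\ref{mainthm} for $(K',J')$ and $(K'',J'')$, and separately establishing smooth non-sliceness of $K'$ and $K''$ via the Ozsv\'ath-Szab\'o $\tau$ invariant (as in the referenced work of Hedden).

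For the algebraic part, the key observation is that $Wh(T_{2,3},0)$ has Alexander polynomial $1$, so by Freedman it is topologically slice; consequently $L = T_{2,3}\#Wh(T_{2,3},0)$ is topologically concordant to $T_{2,3}$. Cabling respects topological concordance, so $J'=L_{(2,-3)}$ is topologically concordant to $J=(T_{2,3})_{(2,-3)}$. Every invariant appearing in Theorem~\ref{mainthm}---the Alexander polynomial, the Levine-Tristram signature function, the Witt classes $w_J(\cdot/p)$, and the Arf invariant---is a topological concordance invariant, so its value on $J'$ equals its value on $J$, and conclusions (1)--(5) transfer verbatim to $(K',J')$. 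For $(K'',J'')$ only conclusions (1)--(3) are claimed, because $\Delta_{J''}=\Delta_J^2$ satisfies the Fox-Milnor factorization and $\Arf(J'') = 2\Arf(J) = 0$. Conclusion (1) is built into the construction; (3) follows from $\sigma_{J''}=2\sigma_J \ne 0$; and (2) holds because $w_{J''}(\cdot/p)=2\,w_J(\cdot/p)$, so every $(1,p)$-Witt sum vanishing for $J$ also vanishes for $J''$.

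For the Floer-theoretic part, we apply Hedden's formulas (with the extensions by Hom and Van Cott) for $\tau$ of cables and Whitehead doubles. Starting from $\tau(T_{2,3})=1$ and $\epsilon(T_{2,3})=1$ (since $T_{2,3}$ is an $L$-space knot), Hedden yields $\tau(Wh(T_{2,3},0))=1$ with $\epsilon(Wh(T_{2,3},0))=1$; additivity of $\tau$ and stability of $\epsilon=1$ under connected sum then give $\tau(L)=2$ and $\epsilon(L)=1$. Van Cott's cable formula applied to $L$ and to $T_{2,3}$ (both satisfying $\tau=g$) gives
\[
\tau(J)=2\tau(T_{2,3})+\tau(T_{2,-3})=1 \quad\text{and}\quad \tau(J')=2\tau(L)+\tau(T_{2,-3})=3,
\]
and additivity yields $\tau(J'')=2\tau(J)=2$. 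Each of $\tau(J'),\tau(J'')$ is positive, so Hedden's Whitehead-double formula applies: the framing $n=2$ satisfies $n\le 2\tau(J')-1=5$ and $n\le 2\tau(J'')-1=3$, giving $\tau(Wh(J',2))=\tau(Wh(J'',2))=1$. Since $\tau$ vanishes on smoothly slice knots, $K'$ and $K''$ are not smoothly slice.

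The main obstacle will be tracking the hypotheses needed for Van Cott's cable formula---essentially $\epsilon=1$, or equivalently $\tau=g$---through the connected sum with $Wh(T_{2,3},0)$ and through cabling with the negative framing $q=-3$, where the sign convention for $\tau(T_{2,q})$ is crucial. Once this bookkeeping is handled, the computation is formal and the two obstructions combine with the algebraic verifications to yield the theorem.
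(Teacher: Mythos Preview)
Your proof is correct and follows the paper's approach: Section~\ref{sectionnotslice} carries out exactly this $\tau$ computation via Hedden's formulas for doubles and cables (using the hypothesis $\tau(J)=g(J)$ rather than Hom's $\epsilon$, but these coincide here since $g(L)=2=\tau(L)$), arriving at $\tau(J')=3$, $\tau(J'')=2$, and hence $\tau(K')=\tau(K'')=1$. The paper does not spell out the algebraic conclusions (1)--(5) for $J'$ and $J''$; your reduction via topological concordance of $J'$ to $J$ is a clean way to handle (2), (3), and (5). One minor caveat: the Alexander polynomial itself is not a concordance invariant, so for conclusion~(4) either note that \emph{failure} of the Fox--Milnor condition is preserved under concordance, or simply observe that $\Delta_{Wh(T_{2,3},0)}=1$ forces $\Delta_{J'}=\Delta_J$ on the nose.
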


In Section~\ref{secshifted},  we outline the proofs of Theorems~\ref{shifted1},~\ref{shifted1'} and~\ref{shiftedw}.  Section~\ref{sectionnotslice} presents the proof of Theorem~\ref{nsslice} using tools from Heegaard-Floer theory.  In Section~\ref{sectioncg}  and Appendices~\ref{hlemma} and~\ref{sec2ndapproach}, we review Casson-Gordon theory and prove    Theorems~\ref{thmwitt1} and
\ref{thmwitt2}.  Similar arguments have appeared, but some depend on a theorem stated by the first author~\cite[Theorem 1]{G2}, whose proof contains a gap (shared with~\cite[Theorem (0.1)]{G1}).  We show how to modify this proof to obtain the results stated above.
In Section~\ref{restrict}, we give some restrictions on  signature functions which satisfy the $m$-signature averaging conditions.
    
 
\section{ Proofs of Theorems~\ref{shifted1},~\ref{shifted1'} and~\ref{shiftedw}}\label{secshifted}
 
Let $S$ be a finite set in $\rr/ \zz$.  
For any function $f(t)$ on $\rr/ \zz$ taking values in an abelian group, define $\mu_S(f(t)) = \sum_{s \in S}f(s)$.
We let $\phi_k\co \rr/\zz \to \rr / \zz$  denote multiplication by the integer $k$. Observe that that if $\phi_k$ is injective on $S$, then $\mu_{\phi_k(S)}(f(t)) = \mu_{S}(f(kt))$. In particular, we have the following.

\begin{lemma}\label{shift} If $S \subset \rr/\zz$ is a finite set on which $\phi_m$ and $\phi_n$ are both injective and $\phi_m(S)=\phi_n(S)$, then for all $f$, $\mu_S(f(mt) - f(nt)) = 0$.

\end{lemma}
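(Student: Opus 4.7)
The plan is to apply the bijection observation that the authors make in the sentence immediately preceding the lemma. Recall they note that for any integer $k$ and any finite $S \subset \rr/\zz$ on which $\phi_k$ is injective, one has the identity $\mu_{\phi_k(S)}(f(t)) = \mu_S(f(kt))$; this is just the change-of-variables $s \mapsto \phi_k(s)$, and injectivity is what guarantees that no term gets counted twice when we reindex the sum.

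Given this, I would first rewrite each summand separately. Since $\phi_m$ is injective on $S$,
\[
\mu_S(f(mt)) \;=\; \mu_{\phi_m(S)}(f(t)),
\]
and since $\phi_n$ is injective on $S$,
\[
\mu_S(f(nt)) \;=\; \mu_{\phi_n(S)}(f(t)).
\]
Next I would invoke the hypothesis $\phi_m(S) = \phi_n(S)$: since the two sets over which we are summing $f$ are identical, the two right-hand sides agree. Subtracting gives $\mu_S(f(mt)) - \mu_S(f(nt)) = 0$, and because $\mu_S$ is linear in its argument this is exactly $\mu_S(f(mt) - f(nt)) = 0$.

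There is essentially no obstacle here; the content of the lemma is entirely in the reindexing observation, and the two injectivity hypotheses are used precisely to license that reindexing on each side. The only thing worth flagging is that $f$ is allowed to take values in an arbitrary abelian group, so one should phrase the last step using linearity of finite sums in that abelian group rather than, say, additivity of an integral.
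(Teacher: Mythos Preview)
Your proposal is correct and follows exactly the approach the paper intends: the authors state the reindexing identity $\mu_{\phi_k(S)}(f(t)) = \mu_S(f(kt))$ and present the lemma as an immediate consequence, which is precisely what you have written out.
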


In the current case of interest, we have an  integer  $m$, an integer $p$ relatively prime to $m(m+1)$, and an integer  $c$ representing an element in $\zz_p^*$.  We let $a =    \frac{m+1}{m} \mod p$ and   $S =  \{ \frac{ca^i}{p}\}\subset \qq/\zz$. Notice that $m a^i = (m+1) a^{i-1}$.  Thus, in this setting $\phi_m(S) = \phi_{m+1}(S)$.

\begin{corollary} With the notation of the previous paragraph, for all $f$, $$\mu_S( f( (m+1)t) - f(mt)) = 0.$$
\end{corollary}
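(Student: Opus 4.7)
The plan is to recognize this corollary as a direct application of Lemma~\ref{shift}, so my task reduces to verifying the two hypotheses of that lemma for the set $S = \{ca^i/p : 0 \le i \le r-1\}$ with $f$ replaced by $f$ in the role of the generic function, applied to the pair of multipliers $m$ and $m+1$.

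First I would check injectivity of $\phi_m$ and $\phi_{m+1}$ on $S$. Since $\gcd(p, m) = \gcd(p, m+1) = 1$, multiplication by $m$ and by $m+1$ induces an automorphism of $\zz_p$, hence of $\tfrac{1}{p}\zz/\zz \subset \rr/\zz$. Restricting to any subset (in particular $S$) preserves injectivity, so this hypothesis is immediate.

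Next I would verify $\phi_m(S) = \phi_{m+1}(S)$. The definition of $a$ gives the relation $ma \equiv m+1 \pmod{p}$, and multiplying both sides by $a^{i-1}$ yields
\[
m \cdot a^i \equiv (m+1) \cdot a^{i-1} \pmod{p}.
\]
Multiplying by $c/p$, this says $\phi_m(ca^i/p) = \phi_{m+1}(ca^{i-1}/p)$ in $\rr/\zz$. Since $r$ is the order of $a$ mod $p$, the indices are cyclic of length $r$, so as $i$ ranges over $\{0, 1, \ldots, r-1\}$, both sides trace out the same subset of $\rr/\zz$. Hence $\phi_m(S) = \phi_{m+1}(S)$.

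With both hypotheses of Lemma~\ref{shift} confirmed, applied with the function $t \mapsto f(t)$, the lemma immediately yields $\mu_S(f((m+1)t) - f(mt)) = 0$. There is really no main obstacle here; the substance is already packaged in Lemma~\ref{shift}, and the only content of the corollary is the observation (already noted in the paragraph preceding it) that the specific set $S$ arising from the $(m,p)$-signature/Witt conditions has the symmetry $\phi_m(S) = \phi_{m+1}(S)$ forced by the defining identity $a = (m+1)/m$.
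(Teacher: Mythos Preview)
Your proposal is correct and takes essentially the same approach as the paper: the corollary is an immediate consequence of Lemma~\ref{shift}, and the paper's preceding paragraph already records the key identity $ma^i = (m+1)a^{i-1}$ giving $\phi_m(S) = \phi_{m+1}(S)$, with injectivity of $\phi_m$ and $\phi_{m+1}$ on $S$ implicit from $\gcd(p,m(m+1))=1$.
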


An immediate application is the case that $f$ is the signature function of a knot $J$, in which case $f(mt)$ is the signature function of the knot $J_{m,\pm1}$.  

\vskip.1in
In the proof of Lemma \ref{shift},  it is not required that $f$ be defined on all of $\rr /\zz$, but only on the sets $S, \phi_m(S)$ and $\phi_n(S)$.  For instance, for a knot $J$ and prime power $p$, there is the function $w_J \co \{\frac{j}{p}\} \to W(\qq(\zeta_p)) \otimes \zz_{(2)}$,  
defined by  $$w_J(\frac{j}{p}) = (1-\zeta_p^j)V + (1-\zeta_p^{-j})V^t,$$ where $\zeta_p = e^{2 \pi i /p}$.

The only missing ingredient, in the proofs of Theorems~\ref{shifted1},~\ref{shifted1'} and~\ref{shiftedw}, is the following theorem.
\begin{theorem}\label{companionthm}
If $\mathbb{S}$ is a satellite of $C$ 
with orbit 
$P$ 
and winding number 
$n$, 
 then
$$ w_{\mathbb{S}}(\frac{j}{p})= w_{P}(\frac{j}{p}) + w_{C}(\frac{nj }{p}).$$
\end{theorem}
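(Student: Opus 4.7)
The plan is to derive the formula as a Witt-class refinement of Litherland's satellite formula for Levine--Tristram signatures, $\sigma_{\mathbb{S}}(\omega) = \sigma_{P}(\omega) + \sigma_C(\omega^n)$. Litherland's proof proceeds by constructing an explicit Seifert surface for $\mathbb{S}$ from those of $P$ and $C$ and analyzing the resulting Seifert matrix; I would run the same construction but track the full Hermitian form $V_\omega = (1-\omega)V + (1-\bar\omega)V^t$ modulo Witt equivalence in $W(\qq(\zeta_p)) \otimes \zz_{(2)}$, rather than just its signature.

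First I would build a Seifert surface $F$ for $\mathbb{S}$ in the standard way. Choose a Seifert surface $F_P$ for $P$ inside the solid torus $V$ so that $F_P \cap \partial V$ consists of $|n|$ parallel $0$-framed longitudes. Embed $V$ as a $0$-framed tubular neighborhood of $C$ in $S^3$, and cap off the boundary longitudes of $F_P$ using $|n|$ disjoint parallel pushoffs $F_C^{(1)}, \ldots, F_C^{(|n|)}$ of a Seifert surface for $C$, pushed off via the $0$-framing. Let $F = F_P \cup \bigcup_i F_C^{(i)}$.

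Next, using an adapted basis for $H_1(F)$ (a basis of $H_1(F_P)$, bases of $H_1(F_C^{(i)})$ for each $i$, and some auxiliary ``bridging'' cycles from the gluing), I would write the Seifert matrix $V_{\mathbb{S}}$ in block form: a pattern block equal to $V_P$, a cable block built from $|n|$ copies of $V_C$ together with off-diagonal linking terms between parallel pushoffs, and cross terms between the pattern and cable blocks. Substituting $\omega = \zeta_p^j$ and passing to $W(\qq(\zeta_p)) \otimes \zz_{(2)}$, the pattern block directly contributes $w_P(\omega)$. For the cable block, a change of basis diagonalizing the cyclic $\zz/|n|$-symmetry that permutes the pushoffs $F_C^{(i)}$ decomposes the form into isotypic components; the calculation identifies the surviving, non-hyperbolic summand with the Seifert form of $C$ evaluated at $\omega^n$, giving the contribution $w_C(\omega^n)$.

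The main obstacle is the second step: verifying that the cable block reduces to a single copy of $w_C(\omega^n)$ rather than a sum over $|n|$-th roots, and that the pattern--cable cross terms together with the non-trivial character summands of the $\zz/|n|$-action assemble into a metabolic form in $W(\qq(\zeta_p)) \otimes \zz_{(2)}$. This is essentially the Witt-class refinement of Litherland's signature calculation for an $(|n|,1)$-cable, which may be isolated as the base case $P = $ unknot (so $w_P(\omega)=0$ and $w_{\mathbb{S}}(\omega) = w_C(\omega^n)$); once handled there via explicit diagonalization and the identification of hyperbolic summands, the general formula follows by combining this cable-block contribution with the pattern block and checking that the cross terms add nothing in the Witt group. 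As a sanity check, taking signatures of both sides recovers Litherland's formula, and the two extreme cases ($P = $ core of $V$ and $C = $ unknot) reduce to trivial identities.
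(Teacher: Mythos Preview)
Your reduction to the cable case is right and matches the paper's first step: by Seifert's construction the Seifert matrix of $\mathbb{S}$ is a direct sum of a Seifert matrix for $P$ and one for $C_{(n,1)}$, so the pattern block contributes $w_P(j/p)$ immediately and everything reduces to showing $w_{C_{(n,1)}}(j/p) = w_C(nj/p)$.

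From there the paper takes a different, more abstract route than your proposed explicit diagonalization. Rather than rerunning Litherland's matrix manipulations over $\qq(\zeta_p)$, it uses Litherland's equality in $W(\qq(t))$ as a black box and transfers it. The obstruction to doing this naively is that substitution $t\mapsto\zeta_p$ does \emph{not} give a well-defined map $W(\qq(t))\to W(\qq(\zeta_p))$, since a Witt representative may have entries with poles at $\zeta_p$. The paper's device is to interpose the Witt group of the localization $\qq[t,t^{-1}]_{(\phi_p)}$: the Seifert-type forms $(1-t)V+(1-t^{-1})V^t$ already live there, the inclusion $\gamma$ into $W(\qq(t))$ is \emph{injective}, and evaluation $t\mapsto\zeta_p$ is well-defined from this intermediate group. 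Litherland's identity, which holds after applying $\gamma$, therefore already holds in $W(\qq[t,t^{-1}]_{(\phi_p)})$ by injectivity, and then pushes forward to $W(\qq(\zeta_p))$. No cable-block computation is redone.

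Your direct approach is viable in principle, but the mechanism you name---diagonalizing the cyclic $\zz/|n|$-action on the pushoffs---is awkward over $\qq(\zeta_p)$: the eigenvalues of that action are $|n|$-th roots of unity, and since $p$ is coprime to $m(m+1)$ (hence to $n$ in the applications) these typically do not lie in $\qq(\zeta_p)$. You would have to extend scalars and descend, or else follow Litherland's row and column operations literally and check that every element inverted along the way remains a unit at $t=\zeta_p^j$. The paper's localization argument is designed precisely to avoid that bookkeeping.
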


This result is very close to a result of Litherland~\cite[Theorem 1]{lith} which states that if $V_t(K) = (1-t)V +(1-t^{-1})V^t$, where $V$ is the Seifert form of $K$, then $V_t(\mathbb{S})$ is Witt equivalent to the form 
 $V_{t^n}(C) \oplus V_t(P)$ 
in the Witt group of the function  field $W(\qq(t))$.  
One would like to argue at this point that  
the  substitution of  $\zeta_p$ for $t$ defines a map $W(\qq(t)) \to
W(\qq(\zeta_p))$, and 
Theorem~\ref{companionthm} results.  Unfortunately,   
this procedure does not lead to a well-defined map $W(\qq(t)) \to W(\qq(\zeta_p))$,
as a class in $W(\qq(t))$ may be represented by a matrix whose entires 
have poles at $\zeta_p$.
  We leave it to  appendix \ref{cableappendix}  to show how this hurdle can be overcome.


\section{Smooth obstructions to slicing}\label{sectionnotslice}

In~\cite{os1} an invariant $\tau$ is defined with the property that if $K$ is smoothly slice, then $\tau(K) = 0$.  In order to apply this, we need to modify our knot  $K$ slightly.  Let $K' = Wh((T_{2,3}\#Wh(T_{2,3},0))_{(2,-3)},2)$. We show $\tau(K') =1$.

As a first step, it follows from~\cite{os1} that $\tau(T_{2,3}) = 1$.  Next, Hedden~\cite{he1} proved  that for any $J$,  $\tau(Wh(J,t)) = 1$ for all   
$t < 2 \tau(J)$.
 Thus, $\tau(Wh(T_{2,3},0))= 1$.  By additivity, $\tau(T_{2,3}\#Wh(T_{2,3},0)) = 2$. 

According to another theorem of  Hedden~\cite{he2}, if $\tau(J) = \text{genus}(J)$  then $$\tau(J_{(s,sn+1)}) = s\tau(J) +\frac{(sn)(s-1)}{2} + s-1.$$
In the case of interest to us, we have $s=2$ and $n=-2$, so $\tau(J_{(2,-3)})= 2\tau(J) -1.$  We do have  $\tau( T_{2,3}\#Wh(T_{2,3},0) ) =   \text{genus}( T_{2,3}\#Wh(T_{2,3},0) ) =2$, so, 
$$\tau( 
(  
T_{2,3}\#Wh(T_{2,3},0)
)_{(2,-3)}
)
 = 2\tau(
 T_{2,3}\#Wh(T_{2,3},0)
 ) -1 = 2(2)-1 = 3.$$

 Finally, again by Hedden's computation of $\tau$ of doubled knots, 
 $$  \tau( 
 Wh( 
 ( 
 T_{2,3} \# Wh(T_{2,3},0)
 )_{(2,-3)}
  ,t)
 ) 
 = 1$$ if  
 $t <6$.
  So in particular,  
  $  \tau( 
 Wh( 
 ( 
 T_{2,3} \# Wh(T_{2,3},0)
 )_{(2,-3)}
  ,2)
 ) 
 = 1$.

 We can also consider $K''= Wh(  (T_{2,3})_{(2,-3)} \# (T_{2,3})_{(2,-3)} , 2).$
 One has $$\tau((T_{2,3})_{(2,-3)})=1$$ using the same formula of Hedden's for cables. So 
 $$\tau( (T_{2,3})_{(2,-3)} \# (T_{2,3})_{(2,-3)} )=2.$$ Then using Hedden's formula for doubles, 
 $\tau(K'')=1.$

\section{Casson-Gordon theory}\label{sectioncg}

By a character  $\chi$ on $X$, we mean a homomorphism $\chi:H_1(X) \rightarrow  \qq/\zz.$
This is a  $d$--character if $\chi:H_1(X) \rightarrow (1/d) \zz/\zz \subset \qq/\zz.$
Given a knot $K$ and a prime power $q$,   
let $S_q$ 
denote the $q$--fold branched cyclic cover of $S^3$ along $K$. Given a $d$--character on $S_q$, Casson and Gordon~\cite{CG1} defined an invariant $\tau(K,\chi)$ taking values in $W(\qq[\zeta_d](t)) \otimes \qq.$ 
Here $W(\qq[\zeta_d](t))$ is the Witt group of Hermitian forms over $\qq[\zeta_d](t)$.  If $d$ is odd (as will be the case when $K$ is a genus one algebraically slice knot), then
$\tau(K,\chi)$ may be refined~\cite{GL1,GL2} to take values in  $W(\qq[\zeta_d](t))\otimes \zz_{(2)}.$  This refinement is useful as these Witt groups have 2--torsion.  Here is the theorem of Casson-Gordon ~\cite{CG1,CG2} which asserts
that certain $\tau(K,\chi)$ vanish when $K$ is slice. (Casson and Gordon proved this theorem for smooth slice disks, and later, based on the work  of Freedman and Quinn~\cite{freedman-quinn}, it was seen to hold in the topological locally flat category.)

 \begin{theorem}[Casson-Gordon~\cite{CG1}]\label{thmcg} Let $K$ be a slice knot  bounding a slice disk $\Delta \subset B^4$.
 Let   
 $W_q$  be the q--fold cyclic branched cover of $B^4$ over $\Delta.$  
 \begin{itemize}
 \item
 If  $\chi$ is a character  on $S_q$ of prime power order  that extends to $W_q$, then   
 $\tau(K, \chi)=0 $. 
 \item A character $\chi$ on $S_q$ extends to $W_q$ if  and only if it vanishes on $\kappa(\Delta,q)$
the kernel of $H_1(S_q) \rightarrow H_1(W_q)$.
 \item
 The kernel $\kappa(\Delta,q)$ is a metabolizer for the linking form on $H_1(S_q)$ and is invariant under the the group of covering transformations. 
  \item
The set of  characters $\chi$ on $S_q$  which extend  to  $W_q$ form a metabolizer, $\mathfrak{m}(q,\Delta)$, for the linking form on $H^1(S_q, \qq/\zz).$
 \end{itemize}
 \end{theorem}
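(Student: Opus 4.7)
The plan is to prove (3) first, derive (4) and (2) from it, and then tackle the main claim (1) by using $W_q$ itself as the bounding $4$-manifold in the definition of $\tau$. Throughout, let $X = B^4\setminus\nu(\Delta)$. Since $H_1(X)=\zz$ generated by a meridian of $\Delta$, the surjection $\pi_1(X)\twoheadrightarrow\zz_q$ defines a cyclic unbranched cover $\tilde X\to X$; refilling the branch set gives $W_q$, which carries a natural $\zz_q$-action fixing the lifted disk, with $\partial W_q=S_q$.

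For (3), I would apply the half-lives-half-dies principle: Poincar\'e--Lefschetz duality for the pair $(W_q,S_q)$, combined with the long exact sequence and universal-coefficient considerations, shows that the kernel $\kappa(\Delta,q)$ of $H_1(S_q;\zz)\to H_1(W_q;\zz)$ has order squared equal to $|H_1(S_q;\zz)|$ -- the correct order for a metabolizer of the nondegenerate torsion linking form. Isotropy of the linking form on $\kappa$ is then the standard geometric argument: for $x,y\in\kappa$, represent them by $2$-chains in $W_q$ with boundaries integer multiples of $x,y$ and compute $\lambda(x,y)\in\qq/\zz$ via intersections in $W_q$, which vanish since both classes are rationally null-homologous there. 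Covering-transformation invariance of $\kappa$ is immediate from the $\zz_q$-equivariance of the construction. Statement (4) is then the Pontryagin-dual form of (3): the nonsingular linking pairing identifies $H^1(S_q;\qq/\zz)$ with $\mathrm{Hom}(H_1(S_q),\qq/\zz)$, under which $\mathfrak{m}(q,\Delta)$ is the annihilator of $\kappa(\Delta,q)$, hence a metabolizer of the dual pairing. Statement (2) is just the lifting criterion for abelian covers applied to $S_q\hookrightarrow W_q$: a character $\chi$ factors through $H_1(W_q)$ iff it vanishes on the kernel $\kappa(\Delta,q)$.

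For (1), I would use $W_q$ itself as the auxiliary $4$-manifold in the definition of $\tau(K,\chi)$. An extension $\tilde\chi: H_1(W_q)\to\zz_d$ combined with the canonical surjection $H_1(W_q)\to\zz$ gives a homomorphism to $\zz_d\times\zz$ and hence a local-coefficient system valued in $\qq[\zeta_d](t)$. Then $\tau(K,\chi)$ is represented by the twisted intersection form $\lambda_{\tilde\chi}$ on $H_2(W_q;\qq[\zeta_d](t))$, modulo an untwisted correction coming from $\sigma(W_q)$ which can be controlled by the retraction argument above. The prime-power order assumption on $\chi$ is what ensures this construction really computes $\tau$, without needing to pass to a multiple of $(S_q,\chi)$.

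The main obstacle is proving Witt-triviality of $\lambda_{\tilde\chi}$. The strategy is again half-lives-half-dies, now with $\qq[\zeta_d](t)$-coefficients: twisted Poincar\'e--Lefschetz duality identifies the twisted $H_2$ with the twisted relative $H^2$, and the image of $H_2(W_q;\qq[\zeta_d](t))\to H_2(W_q,S_q;\qq[\zeta_d](t))$ should be a self-annihilating half-rank Lagrangian for $\lambda_{\tilde\chi}$. The key technical input is the vanishing $H_*(\tilde X;\qq[\zeta_d](t))=0$: since $X$ is rationally a circle and the twisted representation sends its generator to a nontrivial element of $\qq[\zeta_d](t)^{\times}$, the twisted chain complex of $\tilde X$ is acyclic over $\qq[\zeta_d](t)$. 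Combining this vanishing with the twisted long exact sequences forces $\lambda_{\tilde\chi}$ to admit a Lagrangian, whence $\tau(K,\chi)=0$ in $W(\qq[\zeta_d](t))\otimes\qq$. The delicate bookkeeping -- checking that half-lives-half-dies really produces a Lagrangian rather than merely a self-annihilating subspace, and handling the correction terms carefully when refining coefficients to $\zz_{(2)}$ (needed for the odd-$d$ applications in this paper) -- is where the most care is required, and is precisely the point flagged in the introduction where the gap in the previously cited proofs appears.
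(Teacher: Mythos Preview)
The paper does not prove Theorem~\ref{thmcg}; it is stated with attribution to Casson--Gordon~\cite{CG1,CG2} and used as input. Your outline is a reasonable sketch of the standard Casson--Gordon argument: parts (2)--(4) are formal consequences of half-lives-half-dies and Pontryagin duality, and part (1) is proved by taking $W_q$ itself as the bounding $4$-manifold and showing that the twisted intersection form is Witt-trivial via a Lagrangian produced by duality, using that the slice-disk exterior is a rational homology circle.

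However, your final paragraph misidentifies the location of the gap discussed in the paper. The gap flagged in the introduction and repaired in Appendices~\ref{hlemma} and~\ref{sec2ndapproach} is \emph{not} in the Casson--Gordon theorem itself, which has stood since the 1970s. It lies in Theorem~\ref{sim}, the statement relating the Casson--Gordon metabolizer $\mathfrak{m}(q,\Delta)$ to the Seifert-surface metabolizer $H=\ker\big(H_1(F)\to H_1(R)/\Tors\big)$. The flawed step in~\cite{G1,G2} is the unjustified assertion that $H\otimes(\qq/\zz)$ equals the kernel of $H_1(F)\otimes(\qq/\zz)\to H_1(R)\otimes(\qq/\zz)$; the repair (Lemma~\ref{hlemma2}) shows this holds only with $\qq'/\zz$ coefficients, i.e.\ after restricting to primes not dividing $|\Tors(H_1(R))|$. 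That issue has nothing to do with whether half-lives-half-dies produces a genuine Lagrangian for the twisted form on $W_q$, nor with the $\zz_{(2)}$-refinement.
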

 
   If $p$ is a prime and $G$ is an abelian group, let $G_{(p)}$ denote  the $p$--primary summand of  $G$. Note that the obstruction to sliceness given by Theorem~\ref{thmcg} can be reduced to a sequence of obstructions associated to each prime  $p$: $\tau(K,\chi)=0$ for $\chi \in \mathfrak{m}(q,\Delta)_{(p)}$.

Let $F$ be a Seifert surface   $K$.  Then  $F \cup \Delta$ bounds a 3--manifold $R \subset B^4$.   
In~\cite[Theorem 1]{G2}, the first author related $\mathfrak{m}(q,\Delta)$ to the metabolizer $H$ for Seifert form on $H_1(F)$   that arises as the kernel of  the map induced by inclusion, 
$H_1(F) \rightarrow H_1(R)/{\Tors(H_1(R))}$.
However, Stefan Friedl~\cite{friedl} found a  gap in the proof, appearing  in the second to last sentence on page six of~\cite{G2}. 
We now want to state  a   corrected version of~\cite[Theorem 1]{G2}.

\begin{theorem}\label{sim} Assume the notations and suppositions of Theorem~\ref{thmcg}, and let $R$ and  $H$ be as above.
Let $p$ be a prime relatively prime to $|\Tors(H_1(R))|.$ Let $\{x'_i\}$ be a basis for $H$. Let  $\{y'_i\}$ be a complementary dual basis in $H_1(F)$ to $\{x'_i\}$, with respect to the intersection pairing.  View $F$ as built from a disk by adding $2g$ bands, with cores representing the $x'_i$ and $y'_i$.  Let the linking circles to those bands be denoted $x_i$ and $y_i$. 
Let $\mathbb{Y}$  be the subgroup of $H_1(S_q)$ generated by the lifts  of the $y_i$ to a single component of the inverse image of $S^3 \setminus F$ in $S_q.$ Then $\kappa(\Delta,q)_{(p)}= {\mathbb{Y}}_{(p)}$.
\end{theorem}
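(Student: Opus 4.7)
The plan is to establish the inclusion $\mathbb{Y}_{(p)} \subseteq \kappa(\Delta,q)_{(p)}$ by a bounding-surface construction in $W_q$, and then to deduce equality on $p$-primary parts via an order comparison.

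For the inclusion, I fix a basis element $x'_i \in H$. By definition of $H$, the class $[x'_i] \in H_1(R)$ is torsion of some order $n_i$ dividing $|\Tors(H_1(R))|$; hence $n_i x'_i = \partial \Sigma_i$ for some 2-chain $\Sigma_i \subset R$. In the convention of the statement, $y_i$ is a meridian in $S^3 \setminus F$ of the band with core $x'_i$. I claim there is a 2-chain $\Sigma_i' \subset B^4 \setminus R$ with $\partial \Sigma_i' = n_i y_i$, produced by pushing $\Sigma_i$ slightly off $R$ into $B^4 \setminus R$ and capping the resulting pushed-off boundary by $n_i$ meridian annuli in a tubular neighborhood of $x'_i$ minus $R$, thereby converting a parallel copy of $n_i x'_i$ into $n_i$ copies of $y_i$. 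Lifting $\Sigma_i'$ to the chosen component of the preimage of $B^4 \setminus R$ in $W_q$—possible since $\Sigma_i'$ lies in an unbranched region—produces a 2-chain whose boundary is $n_i y_i^{(1)}$, showing $n_i [y_i^{(1)}] = 0$ in $H_1(W_q)$. Since $\gcd(n_i, p) = 1$, the $p$-primary component of $[y_i^{(1)}]$ vanishes in $H_1(W_q)_{(p)}$, giving $\mathbb{Y}_{(p)} \subseteq \kappa(\Delta,q)_{(p)}$.

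For the order equality, Theorem~\ref{thmcg} asserts that $\kappa(\Delta,q)$ is a metabolizer for the linking form on $H_1(S_q)$, so $|\kappa(\Delta,q)_{(p)}|^2 = |H_1(S_q)_{(p)}|$. On the $\mathbb{Y}$ side, the standard Seifert-matrix presentation of $H_1(S_q)$ (as in~\cite{lith}) identifies $\mathbb{Y}$ as a subgroup whose order satisfies $|\mathbb{Y}|^2 = |H_1(S_q)|$; the essential input is that $H$ is a rank-$g$ isotropic subspace of $H_1(F)$ for the Seifert form, a standard consequence of $R$ being a 3-manifold bounded by $F \cup \Delta$ in $B^4$. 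Consequently $|\mathbb{Y}_{(p)}| = |\kappa(\Delta,q)_{(p)}|$, and the inclusion upgrades to equality on $p$-primary parts.

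The main obstacle is the explicit construction of $\Sigma_i'$ in $B^4 \setminus R$—specifically, the cap-off of the pushed-off $n_i x'_i$ to $n_i y_i$ via meridian annuli—and the verification that the lifted surface in $W_q$ genuinely realizes $n_i y_i^{(1)}$ as a boundary rather than some mixture of different lifts. This is precisely the delicate step at which the original argument of~\cite{G2} required repair, as it implicitly treated $x'_i$ as null-homologous in $R$ rather than merely torsion. The hypothesis that $p$ is coprime to $|\Tors(H_1(R))|$ makes the multiplicative factor $n_i$ invertible on $p$-primary parts and is exactly what is needed for the argument to go through.
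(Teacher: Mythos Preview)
There are two genuine gaps.

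\textbf{The geometric step.} In the conventions of the statement, $y_i$ is the linking circle of the band with core $y'_i$, not of the $x'_i$-band; the meridian of the $x'_i$-band is $x_i$. More to the point, a push-off of $x'_i$ into $S^3\setminus F$ is not any single meridian: in the Alexander-dual basis one finds that $i_+(x'_i)$ and $i_-(x'_i)$ are linear combinations of the $y_j$ with coefficient matrices $M^t$ and $M^t+I$, where $M$ is the off-diagonal block of the Seifert matrix in the basis $\{x'_i\}\cup\{y'_i\}$. So your push-off-and-cap construction, carried out honestly, produces a $2$-chain in one copy of $B^4\setminus R$ whose boundary in $S^3\setminus F$ represents $n_i\cdot i_{\pm}(x'_i)$, not $n_i y_i$. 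Moreover the positive and negative push-offs land in \emph{adjacent} sheets of $S_q$, so you cannot simply subtract them to isolate a single $\tilde y_{i,0}$. What your construction actually proves (and this is the paper's Lemma~\ref{xkappalemma}, done more directly by lifting $\Sigma_i\subset R$ itself rather than a push-off) is that $\mathcal{X}'_{(p)}\subset\kappa(\Delta,q)_{(p)}$, where $\mathcal{X}'$ is generated by all lifts of the $x'_i$. Getting from there to $\mathbb{Y}_{(p)}$ still requires the identifications $\mathcal{X}'_{(p)}=\mathcal{Y}_{(p)}$ and $\mathcal{Y}_{(p)}=\mathbb{Y}_{(p)}$.

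\textbf{The order count.} The assertion $|\mathbb{Y}_{(p)}|^2=|H_1(S_q)_{(p)}|$ is not what the Seifert-matrix presentation gives you. That presentation yields the analogous equality for $\mathcal{Y}$, the subgroup generated by \emph{all} lifts $\tilde y_{i,\alpha}$, $\alpha\in\zz_q$: the block-triangular shape of the $qg\times qg$ presentation matrix is what makes $|\mathcal{Y}_{(p)}|^2=|H_1(S_q)_{(p)}|$ visible. But $\mathbb{Y}$ is generated only by the $g$ lifts $\{\tilde y_{i,0}\}$ to a single sheet, and a priori $\mathbb{Y}_{(p)}$ could be strictly smaller than $\mathcal{Y}_{(p)}$. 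The equality $\mathbb{Y}_{(p)}=\mathcal{Y}_{(p)}$ is precisely the content of Lemma~\ref{localizelemma}, and it is not formal: one must show that both $M$ and $M+I$ are invertible over $\zz_{(p)}$, which the paper extracts from the relation $\Gamma^q=(\Gamma-I)^q$ on $H_1(S_q)$ with $\Gamma=(V-V^t)^{-1}V$. Without this step your order comparison fails, and the inclusion---even once repaired---does not upgrade to an equality.
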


Two independent proofs of Theorem \ref{sim} are presented in Appendices~\ref{hlemma} and~\ref{sec2ndapproach}.  In \cite[Theorem 8.6]{friedl} and  \cite[page 511]{COT}, an equivalent  result is asserted for almost all primes $p$ (rather than for all primes not dividing  $|\Tors(H_1(R))|.$)

To each element   $z \in  H_1(S_q)_{(p)}$,  there is an associated character 
$$ \chi_z \co H_1(S_q)_{(p)} \to \zz_{p^k} \subset \qq / \zz ,$$  
(for some value of $k$) 
defined by $\chi_z(w)= \lk(w,z) \in \qq / \zz$. 

\begin{corollary} \label{simcor}Assuming  the notations and suppositions of Theorems~\ref{thmcg} and~\ref{sim}, then $\mathfrak{m}(q,\Delta)_{(p)}= \{\chi_z| z \in {\mathbb{Y}}_{(p)}\}.$
\end{corollary}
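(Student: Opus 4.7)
The plan is to derive the corollary by translating Theorem~\ref{sim} through the self-duality of $H_1(S_q)$ provided by its linking form. Because $q$ is a prime power and $S_q$ is a $q$-fold branched cyclic cover of $S^3$ along a knot, $H_1(S_q)$ is a finite abelian group, so the linking form on $H_1(S_q)$ is nondegenerate; the adjoint map $z\mapsto\chi_z$ (with $\chi_z(w)=\lk(w,z)$) is therefore an isomorphism from $H_1(S_q)$ onto $\mathrm{Hom}(H_1(S_q),\qq/\zz)=H^1(S_q;\qq/\zz)$. Since the linking form pairs elements of different primary type trivially, the primary decomposition of $H_1(S_q)$ is orthogonal and this adjoint isomorphism restricts to an isomorphism on each primary summand. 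In particular, the $\chi_z$ of the corollary (for $z\in H_1(S_q)_{(p)}$, with values in $\zz_{p^k}\subset\qq/\zz$) agrees with the restriction of the adjoint isomorphism to the $p$-primary summand.

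Next I would apply this dictionary. Under $z\mapsto\chi_z$, the set of characters on $S_q$ vanishing on a subgroup $A\subset H_1(S_q)$ corresponds to the linking-form annihilator $A^\perp$. By the first bullet of Theorem~\ref{thmcg}, $\mathfrak{m}(q,\Delta)$ consists of exactly those characters that vanish on $\kappa(\Delta,q)$, so $\mathfrak{m}(q,\Delta)$ corresponds to $\kappa(\Delta,q)^\perp$. The third bullet of Theorem~\ref{thmcg} tells us that $\kappa(\Delta,q)$ is a metabolizer, so $\kappa(\Delta,q)^\perp=\kappa(\Delta,q)$. Therefore
$$\mathfrak{m}(q,\Delta)=\{\chi_z\co z\in\kappa(\Delta,q)\},$$
and passing to $p$-primary components on both sides (legitimate by the orthogonality noted above) gives $\mathfrak{m}(q,\Delta)_{(p)}=\{\chi_z\co z\in\kappa(\Delta,q)_{(p)}\}$.

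The final step is simply to invoke Theorem~\ref{sim} to replace $\kappa(\Delta,q)_{(p)}$ by $\mathbb{Y}_{(p)}$, yielding $\mathfrak{m}(q,\Delta)_{(p)}=\{\chi_z\co z\in\mathbb{Y}_{(p)}\}$, which is the stated equality. Since Theorem~\ref{sim} supplies all of the geometric and homological content, the corollary is essentially a bookkeeping exercise in linking-form duality; the only step requiring real care is verifying that the primary decomposition of $H_1(S_q)$ is orthogonal under the linking form so that the adjoint isomorphism is compatible with localization at $p$, and this is a standard fact about linking forms on finite abelian groups. No substantive obstacle is anticipated beyond the invocation of Theorem~\ref{sim} itself.
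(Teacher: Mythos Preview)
Your argument is correct and is precisely the duality argument the paper has in mind; the corollary is stated in the paper without proof because it follows immediately from Theorem~\ref{sim} once one unwinds the definition of $\chi_z$ via the linking form and uses that $\kappa(\Delta,q)$ is self-annihilating. One minor bookkeeping slip: the fact that $\mathfrak{m}(q,\Delta)$ consists of the characters vanishing on $\kappa(\Delta,q)$ comes from combining the second and fourth bullets of Theorem~\ref{thmcg}, not the first.
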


We can now summarize the proof of Theorem~\ref{thmwitt1}.  Details follow as in~\cite{G2}.
\begin{proof}[Proof of Theorem~\ref{thmwitt1}] 

By Theorem~\ref{sim}, one needs to show that the vanishing of the Casson-Gordon invariants for characters  $\chi_{z}$ with $z\in {\mathbb{Y}}_p$ implies the surgery curve $J$ satisfies the specified $(m(K),p)$--Witt conditions.  There are two steps.  First, one considers a new knot, $K'$, formed from $K$ by tying a knot  $-J$ in the band of the Seifert surface representing $J$.    This new knot is slice, since it has surgery curve $J \# -J$, which is slice.  The manifold $R$ for $K'$ is built by adding a two-handle to $F \times [0,1]$, and can be seen to be a solid handlebody, in fact, a solid torus.  Thus, Theorem~\ref{sim} implies that for all the relevant characters, the Casson-Gordon invariants vanish.  The proof is completed by proving that the effect of changing $K$ to $K'$ on the Casson-Gordon invariants is to add the  sum of invariants appearing in the $(m(K),p)$--Witt conditions.  
\end{proof}
   
\noindent(We take this opportunity to remark that  \cite[Theorem  (3.5)]{G1} remains valid. Although  \cite[Theorem 1.1]{G1} (which is \cite[Theorem 1]{G2} in the case $q=2$) is used in the proof
 of \cite[Theorem  (3.5)]{G1}, \cite[Theorem 1.1]{G1}  is only used in the case that $R$ is a handlebody. For similar reasons, the proof of  \cite[Theorem 7] {naik} is valid.)

\begin{proof}[Proof of Theorem~\ref{thmwitt2}] 
If $K$ is an algebraically slice knot of genus one,   $m = m(K)$, and $q$ is odd, then $H_1(S_q)$ is the direct sum of two cyclic groups of order $(m+1)^q -m^q$.
For each odd prime $p$ such that $ \gcd(p^2 ,  (m+1)^q-(m)^q)=p$, the $p$--primary part of $H_1(S_q)$
(denoted $H_1(S_q)_{(p)}$) is a two-dimensional vector space over $\zz_p$.
An analysis of   $H_1(S_q)$ (as in the proof of~\cite[top of page 16]{G2}) shows that the two metabolizers for the Seifert form spanned by the two surgery curves, say $J_1$ and $J_2$, lead to two distinct metabolizers
for the linking form restricted to $H_1(S_q)_{(p)}$. In fact, these metabolizers are eigenspaces for a generator of the group of covering transformations with the distinct eigenvalues $(m+1)/m$, $m/(m+1)$. Thus this linking form on $H_1(S_q)_{(p)}$ is hyperbolic. It follows that an  element in $H_1(S_q)_{(p)}$ in the complement of the union of these two metabolizers cannot have self-linking zero. So the linking form on $H_1(S_q)_{(p)}$ has only these two metabolizers. 
 
If $K$ is slice, then $\kappa(\Delta,q)_{(p)}$ must be one of these two metabolizers. 
Thus by Theorem~\ref{thmcg},  if $\chi \co H_1(S_q) \rightarrow (1/p) \zz/\zz$ vanishes on $\kappa(\Delta,q)_{(p)}$, then $\tau(K, \chi)=0$.   By~\cite[proof of Theorem 3]{G2}, for each of these  $p$, either  $J_1$ or  $J_2$ must satisfy the 
$(m,p)$--Witt conditions. But for 
  $K= W(J, (m(m+1))$, both $J_1$ and $J_2$ have the isotopy type of $J \# T_{(m,m+1)}$.
\end{proof}


\section{The averaging conditions restrict where the jumps can occur}\label{restrict}

We consider the family $\mathcal{J}$ of   step functions $f$ on $[0,1]$ which vanish at $0$ and $1$ and have a finite number of jumps, with value at the jumps the average of the one-sided limits.  
Define for $f \in \mathcal{J}$, $$\Sigma_p(f)=  \Sigma_{i=1}^{p-1}f(i/p).$$
Consider, also,  the family of symmetric jump functions
$$\mathcal{S}= \{ f \in \mathcal{J}| f(x)=f(1-x)\}.$$  These include the knot signature functions.

We  say $\sigma \in \mathcal{S}$ satisfies the  $m$--signature averaging condition if: for  each $p$ relatively prime to $m$ and $m+1$, $\Sigma_p(\sigma) =0$. 
The  $m$--signature averaging condition is a consequence of the  $(m,p)$--signature conditions for all $p$ relatively prime to $m$ and $m+1$.

The Alexander polynomial of the knot $5_2$ is $2 - 3 t + 2 t^2$  \cite{knotinfo} which has simple roots at
$\frac{1}{4} \left(3 \pm i \sqrt{7}\right)$. These roots lie on the unit circle and have argument $ \pm 2 \pi  a$ where 
$a=\frac 1 {2 \pi i} \log(\frac{1}{4} \left(3+i \sqrt{7}\right)) \approx
0.115$.

\begin{prop}\label{5_2} The  number  $a$ is irrational. The signature function of $ 5_2 \ \# -(5_2)_{2,1}$ has jumps in the interval 
$[0,\frac 1 2]$ at $\frac a 2$, $a$, and 
$\frac {1-a} 2$
 and this signature function satisfies the $(1,p)$--signature conditions for all odd $p$.
\end{prop}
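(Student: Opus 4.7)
The plan is to handle the three assertions in turn. First, to establish that $a$ is irrational, observe that $a \in \mathbb{Q}$ if and only if $\zeta := \frac{1}{4}(3 + i\sqrt{7}) = e^{2\pi i a}$ is a root of unity. The minimal polynomial of $\zeta$ over $\mathbb{Q}$ is the irreducible polynomial $2z^2 - 3z + 2 = \Delta_{5_2}(z)$; its monic normalization $z^2 - \frac{3}{2}z + 1$ has non-integer coefficients, so $\zeta$ is not an algebraic integer and, in particular, not a root of unity. Hence $a \notin \mathbb{Q}$.

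Next, I would pin down the jump locations. The function $\sigma_{5_2}$ can jump only at unit-circle roots of $\Delta_{5_2}$, namely $e^{\pm 2\pi i a}$, so $\sigma_{5_2}$ jumps only at $t = a$ and $t = 1-a$; both jumps are nontrivial (of size $\pm 2$) because $\sigma_{5_2}(1/2) = -2$. By Litherland's cabling formula for signatures (a signature-valued specialization of Theorem~\ref{companionthm}, using that $T_{2,1}$ is unknotted and hence has vanishing signature function), $\sigma_{(5_2)_{2,1}}(t) = \sigma_{5_2}(2t \bmod 1)$, with jumps at $\{a/2,\, (1-a)/2,\, (1+a)/2,\, 1-a/2\}$. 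Since $\sigma_{K\#-L} = \sigma_K - \sigma_L$, the signature function of $5_2 \# -(5_2)_{2,1}$ has jumps contained in the union of these six irrational points. Any coincidence among them would force $a$ to equal one of $0$, $1$, $1/2$, $1/3$, or $2/3$, all rational, which is excluded by the first step. Restricting to $[0, 1/2]$ leaves exactly the three points $a/2$, $a$, $(1-a)/2$.

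For the last claim, I would apply Theorem~\ref{shifted1} directly, with $m = 1$ and $K = 5_2$. Since $5_2$ has a nonzero signature function and $K_{(1,1)} = K$, Theorem~\ref{shifted1} says immediately that $\sigma_{5_2 \# -(5_2)_{2,1}}$ is nonzero and satisfies the $(1,p)$-signature conditions for every $p$ coprime to $1$ and $2$, i.e., every odd $p > 1$. (The underlying mechanism, for a direct proof, is the corollary to Lemma~\ref{shift}: for $S = \{c \cdot 2^i/p\}$ one has $\phi_2(S) = S$, so $\mu_S\bigl(\sigma_{5_2}(2t) - \sigma_{5_2}(t)\bigr) = 0$, which is the required vanishing.)

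The main technical care in this proof is the jump bookkeeping in step two: one must check that none of the six candidate jump positions coincide and that each jump of the summands is genuinely nonzero. Both reduce immediately to the irrationality of $a$ established in step one together with $\sigma_{5_2}(1/2) \neq 0$, so no serious obstacle remains.
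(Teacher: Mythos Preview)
Your proof is correct and follows essentially the same route as the paper: irrationality via the non-monic Alexander polynomial (equivalently, $\zeta$ not an algebraic integer), jump locations via Litherland's cabling formula and additivity of signatures, and the $(1,p)$--signature conditions via Theorem~\ref{shifted1} with $m=1$, $K=5_2$. You supply a bit more detail than the paper does---explicitly checking that $\sigma_{5_2}(1/2)\ne 0$ so the jumps are genuine, and that the six candidate jump points are distinct---but the argument is the same.
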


\begin{proof} 
 If $a$ were rational,
$2 -  3 t + 2t^2$ would have to be a  factor of some cyclotomic polynomial; but these are monic.
The signature function of $5_2$ viewed as a function on $[0,1]$ has jumps at $a$ and $1-a$.
Using \cite{lith} or \cite{livmel}, the signature function of the knot  
$(5_2) \# - (5_2)_{2,1}$ jumps at exactly $\frac a 2$, $a$, $\frac {1-a} 2$,$\frac {1+a} 2$, $1-a$, $1-\frac a 2$.
 By Theorem
\ref{shifted1}, $(5_2)\# - (5_2)_{2,1}$ satisfies the  $(1,p)$--signature conditions for all odd $p$.
 \end{proof}

This example contradicts a claim that we once (see the  sentence beginning on the first line of~\cite[page 486]{GL1}) deferred to a future publication, but now retract.    
Note that  the locations of the irrational  jumps $\frac a 2$, $a$, $\frac {1-a} 2$ in the first half interval  together with  $1$ are linearly dependent  over $\qq$. Our next theorem says that this is necessary for the jumps of a signature function which satisfies the $m$--signature averaging condition.

For $0 < a < 1$, let $\chi_a$ denote the characteristic function which takes value  one on $[0,a)$, value  $1/2$ at $a$ and value zero on $(a,1]$. We have that
$$\Sigma_p(\chi_a)=  \begin{cases}
\lfloor pa \rfloor & \text{if $pa \notin \zz$},\\
\lfloor pa \rfloor -\frac 1 2 & \text{if $pa \in \zz$}.
\end{cases} $$ 
where  $\lfloor x \rfloor$ denotes the greatest integer in $x$.
 
For $0 < a < \frac 1 2$, consider the symmetric jump function on $[0,1]$, 
$S_a=  \chi_{1-a} -\chi_a$. We have that $S_a \in \mathcal{S}$ and   
$$\Sigma_p(S_a)= \lfloor p(1-a) \rfloor -\lfloor p a\rfloor .$$
We define  $F_p(a)$ by:
\begin{equation} \label{frac} 
\Sigma_p(S_a)-p \int_0^1 S_a(x)\ dx=     F_p(a)  = \begin{cases}
2 < p a> -1 & \text{if $pa \notin \zz$},\\
0 & \text{if $pa \in \zz$}.
\end{cases} 
\end{equation}
where   {\it $<x> = x-\lfloor x \rfloor$ denotes the fractional part of $x$}.

\begin{theorem}\label{thmirrational} 
Let $\sigma \in \mathcal{S}$  and let $\{ j_1, \ldots , j_s\}$ be  
the irrational points of discontinuity of $\sigma$ that lie in the interval $[0,\frac 1 2]$.
 Suppose $s \ge1$. 
If $\sigma$ satisfies the $m$--signature averaging condition, then $\{j_1, \ldots, j_s, 1\}$ are linearly dependent over $\qq$. \end{theorem}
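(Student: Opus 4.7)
The plan is to reduce the averaging identity to a density statement on a torus and then apply Kronecker's theorem. First, I would decompose $\sigma = \sum_i c_i S_{a_i}$, where $a_i$ ranges over the (finitely many) jump points of $\sigma$ in $(0,\tfrac12)$, $c_i \neq 0$ is the corresponding jump height, and $S_a = \chi_{1-a}-\chi_a$ is the elementary symmetric jump function introduced just before the theorem. (Symmetry of $\sigma$ rules out a nonzero jump at $\tfrac12$, so this decomposition is exhaustive.) Linearity of $\Sigma_p$ combined with Equation~(\ref{frac}) yields
\begin{equation*}
\Sigma_p(\sigma) \;=\; p\!\int_0^1\sigma(x)\,dx \;+\; \sum_i c_i F_p(a_i).
\end{equation*}
Since $|F_p(a_i)|\le 1$ while $\Sigma_p(\sigma)=0$ for all $p$ coprime to $m(m+1)$, letting $p\to\infty$ first forces $\int_0^1\sigma = 0$, and then gives $\sum_i c_i F_p(a_i) = 0$ for every such $p$.

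Next, I would separate the jumps into the irrational locations $j_1,\ldots,j_s$ with heights $e_1,\ldots,e_s$ and the rational locations $r_1,\ldots,r_t$ with heights $d_1,\ldots,d_t$. Letting $N_0$ be the least common multiple of the denominators of the $r_l$ and setting $M = m(m+1)$, I would restrict to the arithmetic progression $p_n = 1 + n M N_0$, $n \ge 1$. Each $p_n$ is coprime to $M$ and satisfies $p_n \equiv 1 \pmod{N_0}$, so $\langle p_n r_l\rangle = r_l$ and $F_{p_n}(r_l) = 2r_l-1$ is independent of $n$; the rational part thus collapses to the constant $C := \sum_l d_l(2r_l-1)$. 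Since each $j_k$ is irrational, $p_n j_k \notin \zz$ and $F_{p_n}(j_k) = 2\langle p_n j_k\rangle - 1$, so the vanishing condition becomes
\begin{equation*}
\sum_{k=1}^{s} e_k\bigl(2\langle p_n j_k\rangle - 1\bigr) \;=\; -C \qquad \text{for all } n\ge 1.
\end{equation*}

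To finish, I would argue by contradiction. If $\{j_1,\ldots,j_s,1\}$ were linearly independent over $\qq$, then so would be $\{M N_0 j_1,\ldots,M N_0 j_s,1\}$, and Kronecker's density theorem would imply that the orbit $\{(\langle p_n j_1\rangle,\ldots,\langle p_n j_s\rangle) : n\ge 1\}$ is dense in $[0,1]^s$. The displayed identity would then force the continuous affine function $(x_1,\ldots,x_s)\mapsto 2\sum_k e_k x_k - \sum_k e_k + C$ to vanish on a dense subset of $[0,1]^s$, hence identically; comparing coefficients forces $e_1=\cdots=e_s=0$, contradicting the assumption $s\ge 1$.

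The step I expect to be trickiest is arranging equidistribution along an admissible subsequence, since the averaging hypothesis only applies to $p$ coprime to $m(m+1)$ rather than to all positive integers. The explicit choice $p_n = 1 + n M N_0$ is designed precisely to sidestep this: it simultaneously guarantees coprimality with $m(m+1)$ and freezes the rational contribution to a constant, so that Kronecker's theorem applies directly to the irrational part without needing an equidistribution statement along a more delicate progression.
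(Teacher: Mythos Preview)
Your proposal is correct and follows essentially the same route as the paper's proof: decompose $\sigma$ as $\sum c_i S_{a_i}$, use the formula $\Sigma_p(S_a)=p\int S_a + F_p(a)$ to reduce to a constraint on fractional parts, restrict to an arithmetic progression modulo the lcm of the rational denominators and $m(m+1)$ so that the rational contribution is constant, and then invoke Kronecker's theorem on the irrational jumps. The only cosmetic differences are that the paper works along $p\equiv -1\pmod{D}$ rather than your $p\equiv 1\pmod{MN_0}$, and it phrases the final step as ``the orbit is not dense, hence dependence'' rather than your contrapositive ``independence implies density, hence the affine form vanishes identically''; your version is slightly more explicit about why the constraint forces all $e_k=0$.
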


\begin{proof} It is easily seen that the integral of $\sigma$ must be zero.
We assume that there is a jump at an  irrational point.  Thus $s\ge1$.  
 
We have that $\sigma$ can be written uniquely as
$\sum_{i=1}^r c_i S_{a_i}$  with the $c_i$ nonzero and the $a_i$ distinct. By reordering, 
we can assume that $a_i$ is rational if and only if $i > s$, for
some $s \le r$. Thus $\{ j_1, \ldots , j_s\}=\{ a_1, \ldots , a_s\}$. For each $i > s$, write $a_i = b_i/d_i$ in
lowest terms.   Let $D$ be the least common multiple of the elements of 
$\{ d_i \ | \  i>s \} \cup \{m, m+1\}$. 
Let $N =\{ p\  |\  p >0, p \equiv -1\pmod{D}\}$. For all $p \in N$, $\Sigma_p \sigma =0$, and $p a_i \notin \zz$. 
Hence, using~\ref{frac}, we have  that $\sum_{i=1}^r  c_i  < p a_i > = r/2$ for all $p\in N$. 
 
Since $p \in N$ is constant modulo
$D$, $\sum_{i=s+1}^r c_i <pa_i>$ is constant for $p \in N$. Hence the sum over the irrational terms,
$\sum_{i=1}^s c_i <pa_i> $ is constant for $p \in N$, as well. Thus
$$\mathcal{I}= \{(<p a_1>, <p a_2>, \cdots <p a_s>)\ | \ p \in N \}$$ is not dense in
$I^s$. 
KroneckerÔs Theorem~\cite[Theorem 442]{HW} states that if the fractional parts of the positive integral multiples of a vector 
$(a_1, a_2, \cdots a_s)$ are not dense in $I^s$, then $\{a_1, \ldots, a_s, 1\}$ are linearly dependent over $\qq$.  
It is not hard to see that the same holds for multiples by any arithmetic sequence, like $N$.
\end{proof}

The above theorem  still holds if one relaxes the hypothesis by removing the condition that the value of  $\sigma$ at the jump points be given by the average of  the one sided limits, as one could redefine  the values at these points without changing the values of $\Sigma_p (\sigma)$ for the specified $p$'s.

Note that, if $a$ is a rational whose denominator  divides $d$, then 
\begin{equation} \label{vary} F_p(a)=  F_{p+k d}(a)   =  - F_{-p+k d}(a). \end{equation}

\begin{definition}
 Given an odd number $d>1$, let $\mathbb{D}(d)$ be the determinant of the 
${\frac {d-1} 2} \times  {\frac {d-1} 2}$ matrix  indexed by $1\le i,j \le {\frac {d-1} 2}$ with entries
$$
 F_i(\frac  j d)= 
 \begin{cases}
2 < \frac {i j} d >-1 & \text{if $d \nmid i j $},\\
0 &  \text{if $d \mid i j$}. \end{cases}
$$ 
\end{definition}

\begin{conj}\label{con} For all odd numbers $d>1$, $\mathbb{D}(d)\ne 0.$
\end{conj}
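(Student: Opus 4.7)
The plan is to express $\mathbb{D}(d)$ as a product (up to a nonzero scalar) of generalized Bernoulli numbers $B_{1,\chi}$ attached to odd Dirichlet characters, and then invoke the classical nonvanishing $B_{1,\chi}\neq 0$ for odd primitive $\chi$ --- a consequence of the analytic class number formula, equivalent to $L(0,\chi)\neq 0$.

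First I would rewrite the entries uniformly as $F_i(j/d)=2((ij/d))$, where $((x))$ denotes the standard sawtooth function (the fractional part of $x$ minus $\tfrac12$ if $x\notin\zz$, and $0$ otherwise). I would then enlarge $\mathbb{D}(d)$ to the $(d-1)\times(d-1)$ sawtooth matrix $\widetilde M$ defined by $\widetilde M_{ij}=2((ij/d))$ for $1\le i,j\le d-1$, and exploit the involution $\sigma\co n\mapsto d-n$. Since $((\cdot/d))$ is $\sigma$-antisymmetric, $\widetilde M$ commutes with $\sigma$, vanishes on the $+1$-eigenspace, and in the basis $\{e_j-e_{d-j}\}_{j=1}^{(d-1)/2}$ of the $-1$-eigenspace $V_-$ equals $2M$, where $M$ is the matrix of $\mathbb{D}(d)$. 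Thus $\det(\widetilde M|_{V_-})=2^{(d-1)/2}\mathbb{D}(d)$, reducing the problem to showing this determinant is nonzero.

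For prime $d=p$, the vectors $v_\chi=\sum_n\chi(n)e_n$ attached to Dirichlet characters $\chi\bmod p$ satisfy $\widetilde M v_\chi=2B_{1,\chi}v_{\bar\chi}$, as one sees by the substitution $k=ij$ inside the defining sum. Even $\chi$ span the $+1$-eigenspace; odd $\chi$ span $V_-$, with each odd real $\chi$ contributing an eigenvalue $2B_{1,\chi}$ and each pair of non-real odd characters contributing a $2\times 2$ block of determinant $-4|B_{1,\chi}|^2$. One obtains $\mathbb{D}(p)=\pm\prod_{\chi\text{ odd}}B_{1,\chi}$, which is nonzero by the nonvanishing statement above. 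This argument should adapt to prime powers $d=p^k$ by iterating: the non-unit indices are the multiples of $p$, and via $n\mapsto n/p$ form a copy of the index set for modulus $p^{k-1}$, so an inductive character decomposition (incorporating the characters lifted from each intermediate modulus $p^j$) should again yield $\mathbb{D}(p^k)$ as a nonzero product of Bernoulli numbers attached to primitive odd characters of conductors $p^j\mid p^k$.

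The main obstacle is the general composite case. Naively extending the formula to all odd characters mod $d$ fails: at $d=21$, for instance, the lift to mod~$21$ of the quadratic character of conductor $3$ has $B_{1,\chi}=0$ via the vanishing Euler factor $1-\chi^*(7)=1-\chi^*(1)=0$, yet the conjecture still predicts $\mathbb{D}(21)\neq 0$. The correct identity must therefore involve only primitive odd characters of conductors $e\mid d$, weighted by nonzero combinatorial factors reflecting the block structure of $\widetilde M$ across the strata $\{n:\gcd(n,d)=d/e\}\cong(\zz/e\zz)^*$. Establishing this refined factorization --- and proving that the combinatorial weights never vanish --- is where a genuinely new idea is required; I would approach it by induction on the number of distinct prime divisors of $d$, peeling off one prime at a time via the Chinese remainder theorem and using the prime-power case as the base.
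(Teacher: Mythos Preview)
The statement is a \emph{conjecture}: the paper does not prove it.  The paper establishes only the prime case (the Proposition immediately following the conjecture), by citing the Carlitz--Olson evaluation of Maillet's determinant, and otherwise reports numerical verification for $d<1500$.  Your prime-case argument is essentially the same as the paper's: diagonalizing the enlarged sawtooth matrix by odd Dirichlet characters gives $\mathbb{D}(p)=\pm\prod_{\chi\text{ odd}}B_{1,\chi}$, which up to an elementary nonzero factor is the relative class number $h_p^{-}$ of $\qq(\zeta_p)$, hence nonzero.  This is exactly the content of the Carlitz--Olson identity the paper invokes.

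Your proposal goes beyond the paper by outlining a strategy for prime powers and for general composite $d$, but as you yourself note, the proposal is not a proof.  The linear-algebra reduction to $\det(\widetilde M|_{V_-})$ is correct, and your $d=21$ example correctly pinpoints the obstruction: the naive extension to all odd characters modulo $d$ fails because imprimitive odd characters can have $B_{1,\chi}=0$ via a vanishing Euler factor, yet the determinant is (conjecturally) still nonzero.  The ``refined factorization'' you describe --- over primitive odd characters of conductors dividing $d$, with nonzero combinatorial weights coming from the block structure across the strata $\{n:\gcd(n,d)=d/e\}$ --- is precisely what would be needed, but you have not produced it, and your inductive sketch (peeling off one prime at a time) does not indicate how the off-diagonal blocks between different gcd-strata are controlled.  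Even the prime-power case is only asserted, not argued.  So the gap you name is real and remains: neither your proposal nor the paper proves the conjecture for composite $d$.
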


This conjecture is true for $d$ prime according to the next proposition. We have verified the conjecture for $d< 1500$ using Mathematica. 
  
\begin{prop}\label{Dp} If ${s}$ is an odd prime, $\mathbb{D}({s})= \pm {2^{\frac {{s}-3} 2}}  h_{s}/{s}$, where $h_{s}$ is the first factor of the class number of the cyclotomic ring $\zz[\zeta_{s}]$. Thus $\mathbb{D}({s})\ne 0.$
\end{prop}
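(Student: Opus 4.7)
The plan is to compute $\mathbb{D}(s)$ by Fourier analysis on $(\zz/s)^*$, expressing it (up to elementary factors) as a product of generalized Bernoulli numbers attached to the odd Dirichlet characters mod $s$, and then to invoke the analytic class number formula for the first factor $h_s$ of the class number of $\zz[\zeta_s]$. This is essentially the Carlitz--Olson evaluation of a Maillet-type determinant.

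First, since $s$ is prime and $1\le i,j\le(s-1)/2$, one has $s\nmid ij$, so every entry of the defining matrix is $f(ij):=2\langle ij/s\rangle-1$, where $f$ is viewed as a well-defined odd function on $(\zz/s)^*$. Form the full $(s-1)\times(s-1)$ matrix $\tilde M$ indexed by $(\zz/s)^*$ with $\tilde M_{ij}=f(ij)$. The substitution $k=ij$ gives, for each Dirichlet character $\chi$ of $(\zz/s)^*$,
$$(\tilde M\chi)(i)=\sum_j f(ij)\chi(j)=\bar\chi(i)\sum_k f(k)\chi(k)=\lambda_\chi\,\bar\chi(i),$$
so $\tilde M\chi=\lambda_\chi\bar\chi$. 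Pairing $k\leftrightarrow s-k$ shows $\lambda_\chi=0$ when $\chi$ is trivial or a nontrivial even character, whereas for $\chi$ odd, using $\sum_k\chi(k)=0$,
$$\lambda_\chi=\frac{2}{s}\sum_{k=1}^{s-1}k\,\chi(k)=2B_{1,\chi},$$
the standard generalized Bernoulli number.

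Next, decompose the space of complex-valued functions on $(\zz/s)^*$ as $V^+\oplus V^-$ under $k\mapsto -k$; since $f$ is odd, $\tilde M$ maps everything into $V^-$, and the odd characters (those with $\chi(-1)=-1$) form a basis of $V^-$. For $\phi\in V^-$, the identity $\sum_{j=1}^{s-1}f(ij)\phi(j)=2\sum_{j=1}^{(s-1)/2}f(ij)\phi(j)$ identifies $\tilde M|_{V^-}$ with $2M$ in the coordinates given by the values on $\{1,\ldots,(s-1)/2\}$, where $M$ is the matrix defining $\mathbb{D}(s)$. In the basis of odd characters, $\tilde M|_{V^-}$ factors as $\mathrm{diag}(\lambda_\chi)$ composed with the permutation $\chi\mapsto\bar\chi$, so
$$2^{(s-1)/2}\,\mathbb{D}(s)=\det(\tilde M|_{V^-})=\pm\prod_{\chi\text{ odd}}\lambda_\chi=\pm\,2^{(s-1)/2}\prod_{\chi\text{ odd}}B_{1,\chi},$$
giving $\mathbb{D}(s)=\pm\prod_{\chi\text{ odd}}B_{1,\chi}$.

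Finally, the analytic class number formula for the first factor (e.g.\ Washington, \emph{Introduction to Cyclotomic Fields}, Ch.~4) reads $h_s=2s\prod_{\chi\text{ odd}}\bigl(-\tfrac12 B_{1,\chi}\bigr)$; since there are $(s-1)/2$ odd characters, this gives $\prod_{\chi\text{ odd}}B_{1,\chi}=\pm\,2^{(s-3)/2}h_s/s$, whence $\mathbb{D}(s)=\pm\,2^{(s-3)/2}h_s/s$, and non-vanishing follows from $h_s\ge 1$. The main technical obstacle is the bookkeeping in the character step: pinning down the identification $\tilde M|_{V^-}=2M$ and tracking the sign produced by the involution $\chi\mapsto\bar\chi$ on the basis of odd characters (its cycle structure depends on whether the Legendre symbol is odd or even, i.e., on $s\bmod 4$). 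Both are routine with some care; everything else is either elementary or a direct invocation of the classical formula.
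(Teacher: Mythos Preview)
Your argument is correct and is precisely the Carlitz--Olson computation; the paper's proof consists of nothing more than a citation to equations (1.7), (2.3), (2.4), (2.5) of Carlitz--Olson, so you have simply unpacked what the paper leaves as a reference. The eigenvalue computation via characters, the identification $\tilde M|_{V^-}=2M$, and the appeal to the relative class number formula $h_s=2s\prod_{\chi\text{ odd}}(-\tfrac12 B_{1,\chi})$ are exactly the ingredients behind the cited equations, and your bookkeeping on the odd/even decomposition and the $\chi\mapsto\bar\chi$ permutation is accurate.
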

\begin{proof} 

The result follows from equations (1.7), (2.3), (2.4), and (2.5) of~\cite{carlO}.
\end{proof}

\begin{theorem} Let $d>1$ be a fixed odd integer for which  $\mathbb{D}(d) \ne 0$. Suppose $\sigma \in  \mathcal{S}$ has all jumps at rational points whose denominator divides $d$.   If $\Sigma_p ( \sigma ) =0$ for all odd $p$, then $\sigma =0$.
\end{theorem}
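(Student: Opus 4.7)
The plan is to expand $\sigma$ in the basis of symmetric jump functions $\{S_{j/d}:1\le j\le (d-1)/2\}$, use~\eqref{frac} to turn the infinite family of hypotheses $\Sigma_p(\sigma)=0$ into a homogeneous linear system in the expansion coefficients, and then recognize the coefficient matrix of that system as the one whose determinant is $\mathbb{D}(d)$. Because $\sigma\in\mathcal S$ vanishes at $0,1$, is symmetric under $x\mapsto 1-x$, equals the average of the one-sided limits at jumps, and has all jumps at rationals $j/d$ with $1\le j\le d-1$ (and since $d$ is odd there is no jump at $1/2$), there are unique real numbers $c_1,\dots,c_{(d-1)/2}$ with $\sigma=\sum_{j=1}^{(d-1)/2} c_j\,S_{j/d}$. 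Since $\int_0^1 S_a=1-2a$, equation~\eqref{frac} rewrites as $\Sigma_p(S_a)=p(1-2a)+F_p(a)$, and linearity gives
$$\Sigma_p(\sigma)=p\sum_{j=1}^{(d-1)/2} c_j\bigl(1-2j/d\bigr)+\sum_{j=1}^{(d-1)/2} c_j\,F_p(j/d).$$

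Next I would kill the linear-in-$p$ part by specializing to $p=d$, which is odd. For each $j$ we have $d\cdot (j/d)\in\mathbb Z$, so $F_d(j/d)=0$, and the hypothesis $\Sigma_d(\sigma)=0$ forces $\sum_j c_j(1-2j/d)=0$, i.e.\ $\int_0^1\sigma=0$. After this, the hypothesis reads
$$\sum_{j=1}^{(d-1)/2} c_j\,F_p(j/d)=0\qquad\text{for every odd }p.$$
By~\eqref{vary}, $F_p(j/d)$ depends only on $p$ modulo $d$; and since $d$ is odd, every residue $r\in\{1,\dots,(d-1)/2\}$ is realized by an odd $p$ (take $p=r$ if $r$ is odd, otherwise $p=r+d$). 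Hence
$$\sum_{j=1}^{(d-1)/2} c_j\,F_r(j/d)=0\qquad (r=1,\dots,(d-1)/2).$$

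The coefficient matrix of this homogeneous system is exactly the $(d-1)/2\times(d-1)/2$ matrix defining $\mathbb{D}(d)$. Since $\mathbb{D}(d)\ne 0$ by hypothesis, the $c_j$ all vanish and therefore $\sigma=0$. No serious obstacle arises; the only two observations that make the argument go through are that specializing at $p=d$ already forces the integral to vanish (so no asymptotic-in-$p$ analysis is required), and that oddness of $d$ is exactly what lets the odd $p$'s reach every residue modulo $d$, converting the infinite family of conditions into the finite nonsingular system above.
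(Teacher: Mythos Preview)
Your proof is correct and follows essentially the same route as the paper's: expand $\sigma$ in the $S_{j/d}$ basis, use~\eqref{frac} and~\eqref{vary} to reduce the hypotheses to the homogeneous linear system with coefficient matrix defining $\mathbb{D}(d)$, and conclude from $\mathbb{D}(d)\ne 0$. The one minor difference is that you obtain $\int_0^1\sigma=0$ by specializing to $p=d$ (where every $F_d(j/d)$ vanishes), whereas the paper infers it from the Riemann-sum limit of $\tfrac1p\Sigma_p(\sigma)$; your observation is a neat shortcut.
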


\begin{proof} We have  $\sigma= \sum_{j=1}^{\frac {d-1} 2} a_j S_{j/d}$ for some $a_j$.
Since $\Sigma_p ( \sigma ) =0$ for all odd $p$, we have that $\int_0^1 \sigma(x) dx=0$.
 We  pick an  odd integer $p(i)$ congruent to $i$ modulo $p$ for every $i$ in the range: $0 \le i\le  \frac {d-1}2$.  
 For each $i$, $\Sigma_{p(i)} ( \sigma ) - p(i) \int_0^1 \sigma(x) dx=0$. Using equations \ref{frac} and \ref{vary}, this gives us the linear equation 
$\sum_{j=1}^{\frac {d-1} 2} a_j F_i(\frac j d)=0.$ The resulting system of $\frac {d-1} 2$ equations in the $\frac {d-1} 2$ unknowns $a_j$ has only the trivial solution if $\mathbb{D}(d) \ne 0$.
\end{proof}

\begin{corollary} Suppose $d>1$ is an odd integer, and   $\mathbb{D}(d) \ne 0$. A non-zero knot signature function satisfying the $1$-signature averaging condition cannot   
have jumps only at points with denominator a divisor of $d$.
\end{corollary}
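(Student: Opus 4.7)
The plan is to observe that this corollary is an essentially immediate consequence of the preceding theorem, applied to the contrapositive. First I would recall that any knot signature function $\sigma_K$ lies in the family $\mathcal{S}$: it is an integer-valued step function on $[0,1]$, vanishes at the endpoints $0$ and $1$, is symmetric under $x \mapsto 1-x$, and takes the average of the one-sided limits at its jumps.

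Next I would unpack what the $1$-signature averaging condition means in this context. With $m=1$, the relevant primes are those coprime to both $m=1$ and $m+1=2$, which is precisely the set of odd integers $p>1$. Thus the hypothesis that $\sigma$ satisfies the $1$-signature averaging condition is exactly the statement that $\Sigma_p(\sigma) = 0$ for all odd $p$.

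Now suppose, for contradiction, that there exists a non-zero knot signature function $\sigma$ satisfying the $1$-signature averaging condition and having all jumps at points whose denominator divides $d$. Since $\sigma \in \mathcal{S}$ and $\mathbb{D}(d) \ne 0$ by assumption, the hypotheses of the preceding theorem are all met. That theorem then forces $\sigma = 0$, contradicting our assumption. Hence no such non-zero signature function can exist.

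There is essentially no obstacle here beyond checking these verifications cleanly; the corollary is a direct specialization of the preceding theorem to $m=1$, together with the observation that knot signature functions belong to $\mathcal{S}$. The only subtlety worth flagging in the write-up is making it explicit that ``$p$ relatively prime to $m$ and $m+1$'' collapses to ``$p$ odd'' when $m=1$, so that the $1$-signature averaging condition exactly matches the hypothesis $\Sigma_p(\sigma)=0$ for all odd $p$ used in the theorem.
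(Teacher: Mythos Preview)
Your proposal is correct and matches the paper's treatment: the paper states this corollary without proof, immediately after the theorem, precisely because it follows at once by noting that knot signature functions lie in $\mathcal{S}$ and that the $1$--signature averaging condition is exactly the hypothesis $\Sigma_p(\sigma)=0$ for all odd $p$. One tiny wording slip to fix: you wrote ``the relevant primes'' where you mean ``the relevant integers $p$,'' since the averaging condition ranges over all $p$ coprime to $m(m+1)$, not just primes.
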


Since knot signature functions cannot jump at points with prime denominators~\cite{tristram}, Proposition~\ref{Dp} does not say anything about knots, except to the extent that it makes Conjecture~\ref{con} plausible.


\begin{appendix}

\section{Witt invariants of cable knots}\label{cableappendix}

The proof of Theorem~\ref{companionthm} follows fairly readily from work of Litherland, some basic knot theoretic results, and consideration of Witt groups.

We begin with an observation: if  $\mathbb{S}$  
is a satellite of $K$ with orbit $P$ and winding number $n$, then for an appropriate choice of Seifert surfaces for $K$,  $P$, and  $\mathbb{S}$, the Seifert matrix for   $\mathbb{S}$ is the direct sum of a Seifert matrix for $P $ and one for $C_{n,1}$.  The construction of the  Seifert surfaces  for a satellite knot which leads to the above result was first done by Seifert~\cite{seifert}.

Thus, to prove Theorem~\ref{companionthm} we need only prove the following:

\begin{theorem}
For $C_{(n,1)}$, the $(n,1)$--cable of $C$, $$ w_{C_{(n,1)}}(\frac{j}{p})= w_{C}(\frac{n j}{p}).$$
\end{theorem}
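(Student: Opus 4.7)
Plan:

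My approach is to work directly with explicit Seifert matrices over $\zz[t,t^{-1}]$ rather than with abstract Witt classes in $W(\qq(t))$, in order to sidestep the specialization gap at roots of unity that motivates the appendix. Starting from a Seifert surface $F$ of genus $g$ for $C$ with Seifert matrix $V$ in a basis $\{x_i\}$ of $H_1(F)$, I build a Seifert surface $\tilde F$ for $C_{(n,1)}$ by Seifert's classical construction: take $n$ parallel Seifert-framed copies $F_1,\dots,F_n$ of $F$ and join their $n$ parallel longitudinal boundary components by $n-1$ bands lying in a collar of $\partial N(C)$, arranged so that the single resulting boundary curve realizes the $(n,1)$-cable. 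An Euler characteristic count gives $\operatorname{genus}(\tilde F)=ng$, and because the bands contribute no new $1$-cycles, the $2ng$ classes $\{x_i^{(k)}\}_{i,k}$ (where $x_i^{(k)}$ is the copy of $x_i$ on $F_k$) form a basis of $H_1(\tilde F)$.

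Second, inspection of the linking numbers of positive pushoffs shows that, in this basis, the Seifert matrix $\tilde V$ of $\tilde F$ has a simple block form: written as an $n\times n$ array of $2g\times 2g$ blocks, the $(k,l)$-block is $V$ when $l\le k$ and $V^T$ when $l>k$. Setting $\Lambda=(1-t)(1-t^{-1})$, the associated Hermitian form $\tilde V_t=(1-t)\tilde V+(1-t^{-1})\tilde V^T$ therefore has $V_t$ on each diagonal block, $\Lambda V^T$ strictly above, and $\Lambda V$ strictly below.

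The crux of the argument is to exhibit an explicit invertible matrix $P\in GL_{2ng}(\zz[t,t^{-1}])$ such that $P^{\ast}\tilde V_t\,P = V_{t^n}\oplus H$, where $H$ is metabolic over $\qq(t)$. Litherland's theorem, applied with pattern $T_{n,1}$ (whose Seifert form is trivial in the Witt group), already yields such a decomposition at the level of $W(\qq(t))$; the contribution of this lemma is to realize the intertwiner $P$ over $\zz[t,t^{-1}]$, so that specialization at $t=\zeta_p^j$ is legitimate. Geometrically, $P$ implements the ``unwinding'' of the $n$-fold longitudinal winding of the cable, converting it into the substitution $t\mapsto t^n$. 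Once $P$ is in hand, setting $t=\zeta_p^j$ sends $H$ to a metabolic form over $\qq(\zeta_p)$, hence to $0$ in $W(\qq(\zeta_p))\otimes\zz_{(2)}$, while the summand $V_{t^n}$ specializes to $V_{\zeta_p^{nj}}$, which represents $w_C(nj/p)$.

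The main obstacle is precisely the integrality of $P$: at the level of $\qq(t)$ the Witt equivalence is classical, but without a representative defined over $\zz[t,t^{-1}]$ it need not descend to an equality in $W(\qq(\zeta_p))\otimes\zz_{(2)}$ after substitution. Once the integral intertwiner is produced, the rest of the argument is a routine matrix computation.
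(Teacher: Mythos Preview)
Your plan correctly sets up the explicit Seifert matrix $\tilde V$ and its Hermitian form $\tilde V_t$, but the crux---producing the integral intertwiner $P$---is asserted rather than carried out, and the step that follows it is exactly the pitfall the appendix exists to avoid. You write that $P^{\ast}\tilde V_t\,P = V_{t^n}\oplus H$ with $H$ metabolic over $\qq(t)$, and then that ``setting $t=\zeta_p^j$ sends $H$ to a metabolic form over $\qq(\zeta_p)$.'' That implication is not valid in general: a Lagrangian for $H$ over $\qq(t)$ may require denominators divisible by the cyclotomic polynomial $\phi_p(t)$, and then the specialization at $\zeta_p^j$ is undefined or the Lagrangian collapses. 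To make your argument work you would need $H$ to carry a Lagrangian defined over $\zz[t,t^{-1}]$ (or at least over $\qq[t,t^{-1}]_{(\phi_p)}$), which is strictly stronger than ``metabolic over $\qq(t)$'' and is not a consequence of Litherland's $\qq(t)$--level statement. So as written the plan has not closed the very gap it sets out to close; you have relocated the specialization problem from $\tilde V_t$ to $H$ without resolving it.

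The paper avoids the search for any explicit $P$. Rather than seek an integral congruence of matrices, it passes to the Witt group $W(\qq[t,t^{-1}]_{(\phi_p)})$ of the localization at $\phi_p$: both the substitution $t\mapsto t^n$ and the specialization $t\mapsto\zeta_p$ are well defined there, and the inclusion $W(\qq[t,t^{-1}]_{(\phi_p)})\hookrightarrow W(\qq(t))$ is \emph{injective} (a local ring into its field of fractions). Litherland's equality lives in $W(\qq(t))$; injectivity pulls it back to the localized Witt group, where the map to $W(\qq(\zeta_p))$ is then automatic. If you wish to salvage the explicit-matrix route, you must actually construct $P$ and display an integral Lagrangian for the complementary block $H$; short of that, the localization/injectivity argument is both cleaner and complete.
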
  
\begin{proof}

The proof is largely contained in a diagram; note in the following description that the central square of the diagram is not apparently commutative, while 
 one has commutativity around the other interior faces of the diagram.

\begin{center}
$
\begin{diagram}
\dgARROWLENGTH=1em
\node{\mathcal{C}}\arrow{s,r}{\lambda_n} \arrow{e,t}{ \alpha}  \node{\mathcal{G}} \arrow{e,t}{\beta}\arrow{s,r}{\lambda'_n} \node{W(\qq[t,t^{-1}]_{(\phi_p)})}  \arrow{e,t}{\gamma}\arrow{s,r}{\eta'_n}  \node{W(\qq(t) )}\arrow{s,r}{\eta_n}   \\
\node{\mathcal{C}} \arrow{e,t}{ \alpha}  \node{\mathcal{G}} \arrow{e,t}{\beta}\arrow{se,r}{\rho'}\node{W(\qq[t,t^{-1}]_{(\phi_p)})}\arrow{e,t}{\gamma}\arrow{s,r}{\rho} \node{W(\qq(t) )}   \\
\node{  }   \node{    }   \node{W(\qq(\zeta_p) )}   \node{} 
\end{diagram}
$
\end{center}

\noindent Here is the notation and necessary background:

 \begin{itemize}
\item   $\mathcal{C}$ is the concordance group;  $\mathcal{G}$ is Levine's algebraic concordance group of Seifert matrices; $\alpha$ is the homomorphism induced by $K \to V_K$.\vskip.05in 

\item $W(\qq[t,t^{-1}]_{(\phi_p)}$  is the Witt group of the localization of $\qq[t,t^{-1}]$ at the $p$--cyclotomic polynomial $\phi_p$,  (that is, the domain formed by inverting all polynomials relatively prime to $\phi_p$); $\beta$ is the map induced by $$V\to (1-t)V + (1-t^{-1})V^t.$$\vskip.05in 
 \item 
 $W(\qq(t))$ is the Witt group of the field of fractions of $\qq[t,t^{-1}]$; $\gamma$ is induced by inclusion.  The inclusion map is injective (see~\cite[Corollary IV 3.3]{mh} in the symmetric case, and~\cite[Proposition 4.2.1 iii)]{ranick} for the hermitian case that arises here).\vskip.05in
 
 \item $\lambda_n$ is the {\it function} induced by forming the $(n,1)$--cable; $\lambda'_n$ is the homomorphism induced by $\lambda_n$.  This map can be given explicitly in terms of Seifert matrices.  That this induces a map on $\calg$ and that the map is a homomorphism is elementary.  (See~\cite{chalivrub, kawauchi} for further discussion.)\vskip.05in
 
 \item The map $\rho$ is induced by the map $t \to \zeta_p$.\vskip.05in
 
  \item The map $\eta_n$ ( respectively $\eta'_n$) is induced by the embedding of  $\qq(t)$ ( respectively $ \qq[t,t^{-1}]_{(\phi_p)}$ )  into  itself which sends $t$ to $t^n$.

  \end{itemize}
  
  The proof of Theorem~\ref{companionthm} is seen to be equivalent to showing that  $$ \rho' \circ \alpha \circ \lambda_n = \rho \circ \eta_n' \circ \beta \circ \alpha.$$ By writing $\rho' = \rho \circ \beta$, we see this will follow from 
 $$ \beta \circ \alpha \circ \lambda_n = \eta_n' \circ \beta \circ \alpha.$$
 
 According to Litherland~\cite{lith}, we have 
  $$\gamma \circ \beta \circ \alpha \circ \lambda_n = \eta_n\circ \gamma \circ \beta \circ \alpha.$$ Using commutativity of the rightmost square, we have $\eta_n \circ \gamma = \gamma \circ \eta_n'$, so Litherland's equality can be rewritten  as 
 $$\gamma \circ \beta \circ \alpha \circ \lambda_n =\gamma \circ \eta_n' \circ \beta \circ \alpha.$$
Finally, because $\gamma$ is injective, this implies 
$ \beta \circ \alpha \circ \lambda_n =  \eta_n' \circ \beta \circ \alpha,$ as desired.
 \end{proof}

\section{One approach to Theorem~\ref{sim}}\label{hlemma}

Let $\qq' =\{ r/s  \in \qq  | \gcd(s,r)=\gcd(s, |\Tors(H_1(R))| )=1\}.$

\begin{lemma} \label{tor}
If $T$ is a finitely generated torsion group, and the prime divisors of $|T|$ are all divisors of   $|\Tors(H_1(R))|$, then $T \otimes (\qq'/\zz)=0$, and 
$\Tor (T,\qq'/\zz)=0$.
\end{lemma}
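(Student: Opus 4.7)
The plan is to recognize $\qq'$ as the localization $S^{-1}\zz$ of $\zz$ at the multiplicative set $S = \{s\in \zz_{>0} : \gcd(s,N)=1\}$, where $N := |\Tors(H_1(R))|$. A direct check shows $\qq'$ is closed under sums and products --- in lowest terms, the denominator of any sum or product of elements of $\qq'$ divides the product of the original denominators and therefore remains coprime to $N$ --- which gives $\qq' \subseteq S^{-1}\zz$; the reverse containment is immediate from the definitions. As localizations are flat, this identification makes $\qq'$ a flat $\zz$-module, so in particular $\Tor(T,\qq')=0$. It will also be convenient to use the standard identification $T\otimes_\zz \qq' \cong S^{-1}T$.

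For the statement $T\otimes (\qq'/\zz)=0$: I would show that each simple tensor $t\otimes (r/s)$ vanishes. Let $d$ be the order of $t$. By hypothesis every prime dividing $d$ divides $N$, while $\gcd(s,N)=1$ by construction, so $\gcd(d,s)=1$. Bezout then produces $a,b\in\zz$ with $ad+bs=1$, whence $r/s\equiv rad/s\pmod{\zz}$. Therefore
\[t\otimes (r/s) \;=\; t\otimes (rad/s)\;=\;(ra)(dt)\otimes (1/s)\;=\;0,\]
since $dt=0$.

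For the statement $\Tor(T,\qq'/\zz)=0$: I would apply $T\otimes_\zz -$ to the short exact sequence $0\to\zz\to\qq'\to\qq'/\zz\to 0$. Using flatness of $\qq'$, the associated long exact sequence becomes
\[0\to \Tor(T,\qq'/\zz)\to T\to S^{-1}T \to T\otimes(\qq'/\zz)\to 0,\]
so $\Tor(T,\qq'/\zz)$ is precisely the kernel of the canonical localization map $T\to S^{-1}T$. If a nonzero $t\in T$ of order $d>1$ satisfied $t/1=0$ in $S^{-1}T$, there would exist $s\in S$ with $st=0$, forcing $d\mid s$. But every prime factor of $d$ lies in $N$ while $\gcd(s,N)=1$, a contradiction. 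Hence the kernel is trivial.

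The only real obstacle is the bookkeeping identification of $\qq'$ as a localization --- once that and its flatness are in hand, both vanishings reduce to the standard principle that localization away from the primes supporting a torsion module annihilates that module.
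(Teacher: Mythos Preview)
Your proof is correct and takes a somewhat different route from the paper's. The paper reduces to the case $T=\zz/k\zz$ cyclic, applies $-\otimes(\qq'/\zz)$ to the resolution $0\to\zz\stackrel{k\cdot}{\to}\zz\to T\to 0$, and then shows directly (via the same B\'ezout computation you use for simple tensors) that multiplication by $k$ is an isomorphism on $\qq'/\zz$; both $\Tor(T,\qq'/\zz)$ and $T\otimes(\qq'/\zz)$ vanish because that map has trivial kernel and cokernel. You instead recognize $\qq'$ as the localization $S^{-1}\zz$, invoke flatness to kill $\Tor(T,\qq')$, and run the long exact sequence coming from $0\to\zz\to\qq'\to\qq'/\zz\to 0$, identifying $\Tor(T,\qq'/\zz)$ with the kernel of $T\to S^{-1}T$. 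Your approach is more conceptual and makes the underlying principle transparent (localizing away from the support of a torsion module annihilates it), while the paper's is marginally more self-contained in that it does not appeal to flatness of localizations. The B\'ezout step is common to both arguments. One minor expository point: in your first paragraph the phrase ``which gives $\qq'\subseteq S^{-1}\zz$'' has the logic slightly tangled---closure under sums and products shows $\qq'$ is a ring, but each containment is immediate from the definitions once fractions are written in lowest terms; this does not affect the validity of the argument.
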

\begin{proof} It suffices to prove this for $T$ a finite cyclic group of order $k$ relativley prime to all the denominators of elements of  $\qq'.$
From the short exact sequence; $$ 0 \rightarrow \zz  \stackrel{  k \cdot}{\rightarrow} \zz  \rightarrow T   \rightarrow   0,$$ 
we obtain: 
$$0  \rightarrow \Tor (T,\qq'/\zz) \rightarrow   \qq'/\zz     \stackrel{  k \cdot}{\rightarrow} \qq'/\zz   \rightarrow T \otimes \qq'/\zz   \rightarrow   0 .$$  
Suppose $s$ is a denominator of an element in $\qq'$, then $\gcd(k,s)=1$, and there exists $a, b \in \zz$, such that $ka+sb=1$.  It follows that $k \cdot  a/s  \equiv  1/s \pmod{1}$. Thus  $ \qq'/\zz \stackrel{  k \cdot}{\rightarrow} \qq'/\zz$ is  surjective. It is easy to see that  $ \qq'/\zz \stackrel{  k \cdot}{\rightarrow} \qq'/\zz$ is  injective. 
\end{proof}

\begin{lemma} \label{short}
A short exact sequence of the form  $$ 0 \rightarrow  T_1
 \stackrel{ \psi} 
\rightarrow T_2 \oplus F_2
 \stackrel{ \phi}   \rightarrow T_3 \oplus F_3  \rightarrow 0 ,$$
  where the $F_i$ are free abelian groups, and the $T_i$ are torsion groups, 
  induces a short exact sequence:
  $$ 0 \rightarrow  T_1 
 \stackrel{\pi_{T_2} \circ \psi } 
\rightarrow T_2 
 \stackrel{ \phi_{| T_2}} \rightarrow T_3  \rightarrow 0 .$$ 
\end{lemma}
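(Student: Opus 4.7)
The plan is to exploit two elementary observations: (i) the torsion subgroup of an abelian group is characteristic, so every isomorphism of abelian groups carries torsion subgroups to torsion subgroups; and (ii) if $T$ is a torsion abelian group and $F$ is free abelian, then the torsion subgroup of $T\oplus F$ equals $T\oplus 0$. Both facts are valid for arbitrary (not necessarily finitely generated) abelian groups.

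First I would verify that the two maps in the proposed sequence are well-defined and have the expected kernel/image. Because $T_1$ is torsion and $\psi$ is injective, observation (ii) forces $\psi(T_1)\subseteq T_2\oplus 0$; hence $\pi_{T_2}\circ\psi$ coincides with $\psi$ regarded as a map into $T_2$, and is in particular injective. Likewise $\phi(T_2)$ is torsion, so it lands in $T_3$, giving a well-defined homomorphism $\phi|_{T_2}\co T_2\to T_3$. Exactness in the middle is then automatic: by the original exactness at $T_2\oplus F_2$,
$$\ker(\phi|_{T_2}) \;=\; \ker(\phi)\cap T_2 \;=\; \psi(T_1)\cap T_2 \;=\; \psi(T_1),$$
which is precisely the image of $\pi_{T_2}\circ\psi$.

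The only remaining point is surjectivity of $\phi|_{T_2}$, which I would deduce by passing to the quotient. The original short exact sequence induces an isomorphism $\bar\phi\co (T_2\oplus F_2)/\psi(T_1)\to T_3\oplus F_3$, and since $\psi(T_1)\subseteq T_2$ the domain decomposes canonically as $(T_2/\psi(T_1))\oplus F_2$. By (i) and (ii), $\bar\phi$ must carry the torsion subgroup of the domain, namely $T_2/\psi(T_1)$, isomorphically onto the torsion subgroup of the codomain, namely $T_3$. Unwinding definitions, this restriction sends $t_2+\psi(T_1)$ to $\phi|_{T_2}(t_2)$, which gives surjectivity (and re-confirms the kernel computation). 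There is no serious obstacle; the entire argument reduces to bookkeeping with torsion subgroups and the canonical splitting $(T_2\oplus F_2)/\psi(T_1) \cong (T_2/\psi(T_1))\oplus F_2$ coming from $\psi(T_1)\subseteq T_2$.
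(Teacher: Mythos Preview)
Your argument is correct. For injectivity and exactness in the middle you and the paper agree essentially verbatim: both reduce to the observation that $\psi(T_1)\subseteq T_2\oplus 0$ because torsion elements map to torsion elements.

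The only genuine difference is in how surjectivity of $\phi|_{T_2}$ is established. The paper argues by a direct element chase: given $x\in T_3$, lift it to $(y,z)\in T_2\oplus F_2$, pick $m,n$ with $my=0$ and $nx=0$, and observe that $(0,mnz)=\phi^{-1}(0)\cap(0\oplus F_2)$ lies in $\psi(T_1)$, hence is torsion, hence vanishes in the free group $F_2$; so $z=0$ and $(y,0)$ already hits $x$. You instead pass to the quotient: the induced isomorphism $\bar\phi\co (T_2/\psi(T_1))\oplus F_2 \to T_3\oplus F_3$ must carry torsion subgroup to torsion subgroup, identifying $T_2/\psi(T_1)$ with $T_3$. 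Your route is slightly more conceptual and avoids choosing auxiliary integers; the paper's route yields the marginally sharper statement that \emph{every} preimage of an element of $T_3$ already lies in $T_2\oplus 0$, though this extra information is not used elsewhere. Either argument is perfectly adequate here.
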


\begin{proof} Exactness on the left, and at the middle of this sequence is immediate. We only need to show 
that $\phi_{| T_2}$ is surjective. Let $x \in T_3$, there exist $(y, z) \in T_2 \oplus F_2$ with $\phi((y, z))= x$.
We wish to show that $z=0$.
There exist  nonzero integers $n$ and $m$  such that $nx=0$, and $my=0$. Then  $\phi((0, mnz))=\phi((mny, mnz))= mnx=0$.
By exactness of the original sequence, $(0, mnz) \in \psi(T_1)$. Since $z \in F_2$, we have that $z=0.$ 

\end{proof}

\begin{lemma} \label{hlemma2}
Let  $\mathcal{T}$ denote $\Tors(H_1(R))$,  and let $H$ denote  the kernel of   $H_1(F) \rightarrow H_1(R)/\mathcal{T}$. We have that 
$H \otimes \qq'/\zz$ is the kernel of the natural map  $H_1(F) \otimes (\qq'/\zz) \rightarrow H_1(R) \otimes (\qq'/\zz).$
\end{lemma}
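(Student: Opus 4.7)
The plan is to establish Lemma~\ref{hlemma2} by reducing it, via Lemma~\ref{tor}, to an assertion about the torsion of a certain cokernel, which is then controlled using Lefschetz duality on the 3-manifold $R$.

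Applying Lemma~\ref{tor} with $T=\mathcal{T}$ gives $\mathcal{T}\otimes(\qq'/\zz)=0$ and $\Tor(\mathcal{T},\qq'/\zz)=0$. Tensoring $0\to\mathcal{T}\to H_1(R)\to H_1(R)/\mathcal{T}\to 0$ with $\qq'/\zz$ therefore produces an isomorphism $H_1(R)\otimes(\qq'/\zz)\cong(H_1(R)/\mathcal{T})\otimes(\qq'/\zz)$, so the lemma reduces to identifying $H\otimes(\qq'/\zz)$ with the kernel of $H_1(F)\otimes(\qq'/\zz)\to(H_1(R)/\mathcal{T})\otimes(\qq'/\zz)$. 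Since $H$ is by definition the kernel of $H_1(F)\to H_1(R)/\mathcal{T}$, this map factors as $H_1(F)\twoheadrightarrow H_1(F)/H\hookrightarrow H_1(R)/\mathcal{T}$, and because $H_1(R)/\mathcal{T}$ is free abelian its subgroup $H_1(F)/H$ is free. Tensoring $0\to H\to H_1(F)\to H_1(F)/H\to 0$ with $\qq'/\zz$ then preserves exactness and identifies $H\otimes(\qq'/\zz)$ with the kernel of $H_1(F)\otimes(\qq'/\zz)\to(H_1(F)/H)\otimes(\qq'/\zz)$. Writing $J$ for the image of $H_1(F)\to H_1(R)$ and $C'=H_1(R)/(J+\mathcal{T})$, everything thus reduces to showing that $(H_1(F)/H)\otimes(\qq'/\zz)\to(H_1(R)/\mathcal{T})\otimes(\qq'/\zz)$ is injective; by the six-term $\Tor$ sequence of $0\to H_1(F)/H\to H_1(R)/\mathcal{T}\to C'\to 0$ this is equivalent to $\Tor(C',\qq'/\zz)=0$, which by Lemma~\ref{tor} will follow once every prime divisor of $\Tors(C')$ divides $|\mathcal{T}|$.

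The hard part is verifying this prime-divisor claim for $C'$. Combining the long exact sequence of the pair $(R,F)$ with the isomorphism $H_0(F)\to H_0(R)$ gives $H_1(R)/J\cong H_1(R,F)$, and Lefschetz duality for the compact orientable 3-manifold $R$ with $\partial R=F\cup_K\Delta$ gives $H_1(R,F)\cong H^2(R,\Delta)$. The long exact sequence of $(R,\Delta)$, together with $\Delta$ being a disk, gives $H_i(R,\Delta)\cong H_i(R)$ for $i=1,2$, and the universal coefficient theorem then produces $H^2(R,\Delta)\cong\operatorname{Hom}(H_2(R),\zz)\oplus\operatorname{Ext}(H_1(R),\zz)\cong H_2(R)\oplus\mathcal{T}$ (here $H_2(R)$ is free because $R$ has non-empty boundary). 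Thus $H_1(R)/J\cong H_2(R)\oplus\mathcal{T}$, with torsion subgroup isomorphic to $\mathcal{T}$. The image of $\mathcal{T}\subset H_1(R)$ in this quotient is itself torsion, hence contained in the $\mathcal{T}$-summand; the further quotient $C'$ therefore decomposes as $H_2(R)$ direct-summed with a quotient of $\mathcal{T}$, so $\Tors(C')$ is a quotient of $\mathcal{T}$ and its prime divisors divide $|\mathcal{T}|$, as required. The main obstacle lies precisely in this Lefschetz-duality step: one must verify carefully that under the isomorphism $H_1(R)/J\cong H_2(R)\oplus\mathcal{T}$ the image of the torsion of $H_1(R)$ genuinely lands in the $\mathcal{T}$-summand, so that $C'$ really has torsion a quotient of $\mathcal{T}$.
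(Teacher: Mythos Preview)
Your argument is correct and follows the same overall route as the paper: reduce via Lemma~\ref{tor} to showing that $(H_1(F)/H)\otimes(\qq'/\zz)\to(H_1(R)/\mathcal{T})\otimes(\qq'/\zz)$ is injective, then control $\Tors(C')$ using the Lefschetz duality/UCT identification $H_1(R)/J\cong H_1(R,F)\cong H^2(R,\Delta)\cong \zz^{\beta_2(R)}\oplus\mathcal{T}$. The one substantive difference is in how the torsion of $C'$ is bounded. The paper organizes this via a $3\times 3$ commutative diagram of subgroups of $H_1(R)$ and invokes Lemma~\ref{short} to extract a short exact sequence $0\to\mathcal{T}/(\mathcal{I}\cap\mathcal{T})\to\mathcal{T}\to\Tors(C')\to 0$, yielding the sharper equality $|\Tors(C')|=|\mathcal{I}\cap\mathcal{T}|$. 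Your direct observation---that the image of $\mathcal{T}$ in $H_1(R)/J$ is torsion and hence lies in the torsion summand, so $C'$ is free~$\oplus$ a quotient of that summand---is more elementary and bypasses Lemma~\ref{short} entirely, at the cost of only getting a divisibility statement rather than an equality; this is all that is needed.

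Your closing worry is unfounded and you should simply drop it: once you know $H_2(R)$ is free, the torsion subgroup of $H_2(R)\oplus\mathcal{T}$ is $0\oplus\mathcal{T}$ independently of any choice of splitting, and the image of a torsion group under any homomorphism is torsion. There is nothing to verify.
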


\begin{proof}  
Let ${\mathcal{I}}$ be the image of $H_1(F) \rightarrow H_1(R)$, and  $\hat {\mathcal{I}}$ be the image of $H_1(F) \rightarrow H_1(R)/\mathcal{T}$. 
We have a short exact sequence:
$$ 0 \rightarrow H  \rightarrow H_1(F)  \rightarrow \hat {\mathcal{I}}   \rightarrow   0.$$
As $\hat {\mathcal{I}}$ is free abelian, $\Tor(\hat {\mathcal{I}}, \qq'/\zz)=0$,  and
we then have a short exact sequence:
$$ 0 \rightarrow H \otimes (\qq'/ \zz) \rightarrow H_1(F) \otimes (\qq'/ \zz)  \rightarrow \hat {\mathcal{I}} \otimes (\qq'/ \zz)  \rightarrow 0.$$

Let $\mathcal{R}$ denote $H_1(R)$, and note that $\mathcal{I} / ( \mathcal{I} \cap  \mathcal{T} )=  \hat {\mathcal{I}}.$
 Consider the lattice of subgroups consisting of $\mathcal{R}$, $\mathcal{I}$,  $\mathcal{T}$,  $\mathcal{I}\cap \mathcal{T}$.
Their inclusions fit into the following commutative diagram with exact rows and columns:
\begin{center}
$
\begin{diagram}
\dgARROWLENGTH=1em
\node{ }  \node{0} \arrow{s,r}{ } \node{0} \arrow{s} \node{0} \arrow{s} \node{}\\
\node{ 0} \arrow{e,t}{ }  \node{{ \mathcal{I}} \cap   \mathcal{T}} \arrow{e}\arrow{s}\node{ \mathcal{T}} \arrow{e}\arrow{s} \node{\mathcal{T}/( {\mathcal{I}} \cap   \mathcal{T} ) } \arrow{e}\arrow{s} \node{0} \\
\node{ 0} \arrow{e } \node{{ \mathcal{I}}    }\arrow{e}\arrow{s} \node{  \mathcal{R}  } \arrow{e}\arrow{s} \node{  \mathcal{R}/\mathcal{I} } \arrow{e}\arrow{s} \node{0}\\
\node{ 0} \arrow{e } \node{ { \hat {\mathcal{I}}}  } \arrow{e}\arrow{s} \node{ \mathcal{R}/\mathcal{T} } \arrow{e}\arrow{s} 
\node{   ( \mathcal{R}/\mathcal{T} ) / \hat {\mathcal{I}}}
\arrow{e}\arrow{s} \node{0}\\
\node{}   \node{0}   \node{0}   \node{0}   \node{}
\end{diagram}
$
\end{center}
To see exactness, view the first two columns as the inclusion of one chain complex into another. The third column is the quotient chain complex. Thus we have a short 
exact sequence of chain complexes. The first two chain complexes are clearly exact. It follows that the third column is exact, using the associated long exact sequence of homology groups.

Using the long exact sequence of the pair $(R,F)$, we may identify $\mathcal{R}/\mathcal{I}$ with $H_1(R,F)$.  Using Lefschetz duality and the universal coeficient theorem, we have   $H_1(R,F) \approx H^2(R,\Delta) \approx H^2(R)  \approx \mathcal{T} \oplus \zz^{\beta_2(R)}$. 
With  these identifications, the last column of the diagram becomes a short exact sequence:
$$ 0 \rightarrow  \mathcal{T}/( {\mathcal{I}} \cap   \mathcal{T} )  
\rightarrow  \mathcal{T} \oplus \mathcal{F} 
   \rightarrow  \Tors (  (  \mathcal{R}/\mathcal{T}) / \hat {\mathcal{I}} ) \oplus \mathcal{F'} \
  \rightarrow 0 ,$$
  where $\mathcal{F}$ and $\mathcal{F}'$ are free abelian groups. 
  By Lemma \ref{short},
   there is a short exact sequence:
  $$ 0 \rightarrow  \mathcal{T}/( {\mathcal{I}} \cap   \mathcal{T} )  
 \rightarrow  \mathcal{T} 
\rightarrow  \Tors (  (  \mathcal{R}/\mathcal{T}) / \hat {\mathcal{I}} ) \
  \rightarrow 0 ,$$
We conclude that 
$|\Tors ( \mathcal{R}/\mathcal{T} ) / \hat {\mathcal{I}})| = |  {\mathcal{I}} \cap   \mathcal{T}|$.  We have that
$$\Tor( ( \mathcal{R}/\mathcal{T} ) / \hat {\mathcal{I}},\qq'/ \zz)= \Tor( \Tors(( \mathcal{R}/\mathcal{T} ) / \hat {\mathcal{I}}),\qq'/ \zz)= 0,$$
by Lemma \ref{tor}. 
So the sequence obtained from the last row of the diagram upon tensoring with 
$\qq'/ \zz$ is exact. In particular,  the map
$ \hat {\mathcal{I}} \otimes (\qq'/ \zz)  \rightarrow (H_1(R)/\mathcal{T})  \otimes (\qq'/ \zz)$ is injective. 
It follows that $H \otimes (\qq'/ \zz)$, the kernel of  $H_1(F) \otimes (\qq'/ \zz)  \rightarrow \hat {\mathcal{I}} \otimes (\qq'/ \zz)$,
 is the same as the kernel of $H_1(F) \otimes (\qq'/\zz) \rightarrow (H_1(R)/\mathcal{T}) \otimes (\qq'/\zz)$.
 
  Considering the middle column, we obtain the  following  exact  sequence:
$$  \mathcal{T} \otimes (\qq'/ \zz) \rightarrow H_1(R)  \otimes (\qq'/ \zz)  \rightarrow H_1(R)/\mathcal{T}  \otimes (\qq'/ \zz)  \rightarrow 0.$$ 
Since by Lemma \ref{tor},   $ \mathcal{T} \otimes (\qq'/ \zz)=0$, 
  we see that $H_1(R)  \otimes (\qq'/ \zz)  \rightarrow (H_1(R)/\mathcal{T})  \otimes (\qq'/ \zz)$
 is injective.  
Thus  the kernel of  $H_1(F) \otimes (\qq'/\zz) \rightarrow (H_1(R)/\mathcal{T}) \otimes (\qq'/\zz)$ is also the kernel of $H_1(F) \otimes (\qq'/\zz) \rightarrow H_1(R) \otimes (\qq'/\zz)$.
\end{proof}

The second to last sentence of~\cite[page 6]{G2} asserts without justification, in the situation of Lemma \ref{hlemma2}, that $H \otimes \qq/\zz$ is the kernel of the natural map  $H_1(F) \otimes (\qq/\zz) \rightarrow H_1(R) \otimes (\qq/\zz)$. 
The original proof of~\cite[Theorem 1]{G2} may then be modified  using  Lemma \ref{hlemma2} and replacing $\qq/\zz$ by  $\qq'/\zz$ 
judiciously. 
This  proof  then yields the conclusion:  ${A^q}_{p} \cap (H \otimes \qq/\zz)$ (in the notation of \cite{G2}) is equal to  $ \mathfrak{m}(q,\Delta)_{(p)}$, for primes $p$ relatively prime to $|{\Tors(H_1(R)}|$. This in turn can be rephrased as Theorem~\ref{sim}.


\section{Another approach to Theorem~\ref{sim}}\label{sec2ndapproach}

\subsection{Notation}

\begin{itemize}
\item $K$  is a slice knot with genus $g$ Seifert surface $F$; $K$ bounds a slice disk $\Delta$; $R \subset B^4$ is a 3--manifold bounded by $F \cup \Delta$.\vskip.05in

\item $S_q$ is the $q$--fold branched cover of $S^3$ branched over $K$ and $W_q$ is the $q$--fold branched cover of $B^4$ branched over $\Delta$.\vskip.05in

\item $H$ is the kernel of $H_1(F) \to H_1(R)/ \text{Torsion}(H_1(R))$;  $\kappa(q,\Delta)$ is the kernel of $H_1(S_q) \to H_1(W_q)$. \vskip.05in

\end{itemize}

We further choose generators for various homology groups:

\begin{itemize}

\item  $\{x'_i\} \cup \{y'_i\}$ is a 
 symplectic basis of $H_1(F)$   where the $x_i$ generate $H$.  \vskip.05in
 
 \item   $F$ is    built from a disk with   1--handles added corresponding to this basis.  The dual linking circles to the bands represent homology classes in $H_1(S^3 \setminus F)$  denoted $\{x_i\} \cup \{y_i\}$.

\end{itemize}

  Recall (see~\cite{rolfsen}) that $S_q$ is built from $q$ copies of $S^3 \setminus F$. These copies can be enumerated cyclically, corresponding to translates under the  deck transformation.  There is a corresponding enumeration of the lifts of $F$  to $S_q$.
  
  \begin{itemize}
  \item The lifts of the $x_i$  are denoted $\tilde{x}_{i,\alpha}$, and similarly for the $\tilde{y}_i$, $\tilde{x}'_i$ and $\tilde{y}'_i$.   The $\alpha$ are indices denoting the appropriate lift of $ S^3 \setminus F$ and $F$.  Here, $\alpha \in \zz_q$.
  \vskip.05in

 \item $\mathcal Y$ denotes the subgroup of $H_1(S_q)$ generated by the $\tilde{y}_{i,\alpha}$.   Similarly for $\mathcal X$, $\mathcal Y'$, and $\mathcal X'$. \vskip.05in

    \item $\mathbb Y$ denotes the subgroup of generated by a single set of lifts:  $\{\tilde{y}_{i,0}\}$.   \vskip.05in
  
  \end{itemize}

\subsection{Statement and proof summary}

Theorem~\ref{sim}   can now be stated succinctly:  if  $p$ relatively prime to the order of Torsion($H_1(R))$, then $\kappa(\Delta, q)_{(p)} = {\mathbb Y}_{(p)}$.  The proof has several steps:

\begin{itemize}

\item Lemma~\ref{relateyxlemma}:     $H_1(S_q)_{(p)} = {\mathcal Y}_{(p)} \oplus {\mathcal X}_{(p)}$ and $|{\mathcal Y}_{(p)} |= |  {\mathcal X}_{(p)}|$.\vskip.05in

\item Lemma~\ref{dualxequalylemma}:   $   {\mathcal X}'_{(p)} = {\mathcal Y}_{(p)} $.\vskip.05in

\item Lemma~\ref{xkappalemma}: $   {\mathcal X}'_{(p)} \subset   \kappa(\Delta, q)_{(p)}$ and $|  \kappa(\Delta, q)_{(p)}|^2 = |H_1(S_q)_{(p)}|$.\vskip.05in

\item Lemma~\ref{localizelemma} ${\mathbb Y}_{(p)} = \mathcal Y_{(p)}$.   \vskip.05in

\end{itemize}

\noindent{\bf Proof of Theorem~\ref{sim}.}  We want to show that   $\kappa(\Delta, q)_{(p)} = {\mathbb Y}_{(p)}$.  By Lemma~\ref{localizelemma}, this is equivalent to showing that  $\kappa(\Delta, q)_{(p)} ={\mathcal Y}_{(p)}$.  By Lemmas~\ref{relateyxlemma} and~\ref{xkappalemma}, the orders of these two groups are the same.  By Lemmas~\ref{dualxequalylemma} and~\ref{xkappalemma}, ${\mathcal Y}_{(p)} \subset \kappa(\Delta,q)$, and the proof is complete.

\subsection{Proofs of lemmas}

\begin{lemma}\label{relateyxlemma}   $H_1(S_q)_{(p)} = {\mathcal Y}_{(p)} \oplus {\mathcal X}_{(p)}$ and $|{\mathcal Y}_{(p)} |= |  {\mathcal X}_{(p)}|$.

\end{lemma}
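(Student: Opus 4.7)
My plan is to exploit the metabolizing property of $H$ to put $V$ in block form, transfer this to a presentation of $H_1(S_q)$, and then count the orders of $\mathcal{X}_{(p)}$ and $\mathcal{Y}_{(p)}$ directly.

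First I would verify that $V$ vanishes on $H$: for $a,b\in H$, integer multiples $na, nb$ (with $n=|\Tors H_1(R)|$) bound surfaces $\Sigma_a,\Sigma_b\subset R$, and $n^2V(a,b)$ equals the intersection number $\Sigma_a\cdot\Sigma_b$ in $B^4$, which vanishes after pushing one surface off $R$ normally. Hence in the basis $\{x_i'\}\cup\{y_i'\}$ the Seifert matrix has block form $V=\bigl(\begin{smallmatrix}0 & B\\ C & D\end{smallmatrix}\bigr)$ with $B-C^t=I$ and $D=D^t$. The standard Mayer--Vietoris presentation $H_1(S_q)=\zz^{2gq}/M\zz^{2gq}$ then inherits the block structure $M=\bigl(\begin{smallmatrix}0 & M_{12}\\ M_{21} & M_{22}\end{smallmatrix}\bigr)$, where $M_{12}$, $M_{21}$, $M_{22}$ are the block-circulant matrices built from the pairs $(B,-C^t)$, $(C,-B^t)$, $(D,-D^t)$ respectively. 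Block anti-diagonal expansion yields $|\det M|=|\det M_{12}|\cdot|\det M_{21}|$, and a transpose-shift identity $UM_{12}^t=-M_{21}$ (with $U$ the cyclic shift on $\alpha$-indices) forces $|\det M_{12}|=|\det M_{21}|$, so each equals $\sqrt{|H_1(S_q)|}$.

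Projecting the presentation onto the $\tilde x$-coordinates identifies $H_1(S_q)/\mathcal{Y}\cong\zz^{gq}/M_{21}\zz^{gq}$, giving $|\mathcal{Y}|=|\det M_{12}|$ unconditionally. The analogous projection onto the $\tilde y$-coordinates gives $H_1(S_q)/\mathcal{X}\cong\zz^{gq}/(M_{12}\zz^{gq}+M_{22}\zz^{gq})$, which a priori is a proper quotient of $\zz^{gq}/M_{12}\zz^{gq}$. The crux of the proof --- and the main obstacle --- is to show that after $p$-localizing with $p\nmid|\Tors H_1(R)|$, we have $M_{22}\zz^{gq}\subset M_{12}\zz^{gq}$, so $|H_1(S_q)/\mathcal{X}|_{(p)}=|\det M_{12}|_{(p)}=|\mathcal{Y}|_{(p)}$. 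This is the critical use of the $p$-hypothesis: the rows of $M_{22}$ encode the $\tilde y$-parts of the two push-offs of the curves $y_i'$, and the required redundancy modulo $M_{12}$-relations comes from expressing $[y_i']\in H_1(R)$ in terms of a basis complementary to $H$ modulo torsion and then transcribing this via surfaces in $R$ bounding suitable multiples of the $x_i'$. Simple $g=1$ examples confirm that the inclusion genuinely fails at primes dividing $|\Tors H_1(R)|$.

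Granted this, $|\mathcal{X}_{(p)}|=|\mathcal{Y}_{(p)}|$ and $|\mathcal{X}_{(p)}|\cdot|\mathcal{Y}_{(p)}|=|H_1(S_q)_{(p)}|$. Combined with the elementary surjection $\mathcal{X}+\mathcal{Y}=H_1(S_q)$ (every class has a representative in some lift of $S^3\setminus F$, whose first homology is generated by the $x_i, y_i$), this forces $\mathcal{X}_{(p)}\cap\mathcal{Y}_{(p)}=0$, completing the direct sum decomposition with equal orders.
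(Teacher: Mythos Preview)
Your setup through the block form of $V$, the block presentation of $H_1(S_q)$, the equality $|\det M_{12}|=|\det M_{21}|=\sqrt{|H_1(S_q)|}$, and the computation of $H_1(S_q)/\mathcal Y$ via $M_{21}$ all match the paper's argument.  The divergence is at the one remaining step: bounding $|\mathcal X_{(p)}|$ from above.

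The paper does \emph{not} try to show that the $M_{22}$--relations are redundant modulo the $M_{12}$--relations.  Instead it observes that $\mathcal X_{(p)}$ is a self--annihilating subgroup for the (nonsingular) linking form on $H_1(S_q)_{(p)}$, which immediately yields $|\mathcal X_{(p)}|^2\le |H_1(S_q)_{(p)}|$.  Combined with the lower bound you also obtained from $H_1(S_q)/\mathcal Y\cong \zz^{gq}/\mathcal M'\zz^{gq}$, this pins down $|\mathcal X_{(p)}|$ exactly and forces $\mathcal X_{(p)}\cap\mathcal Y_{(p)}=0$.  No appeal to $R$, to surfaces in $B^4$, or to the hypothesis on $p$ is made at this stage.

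Your proposed route has a genuine gap at the ``crux'': the inclusion you want, that the rows of $M_{22}$ lie in the $\zz_{(p)}$--row span of $M_{12}$, is a statement purely about the Seifert matrix (the blocks $M_{12}$ and $M_{22}$ are built from $B,C,D$, which are linking numbers in $S^3$).  The sketch you give --- ``express $[y_i']\in H_1(R)$ in terms of a complementary basis and transcribe via surfaces in $R$'' --- never produces a relation \emph{in $S_q$} among the $\tilde y$--generators alone; surfaces in $R\subset B^4$ give relations in $H_1(W_q)$, not redundancies among rows of the $S^3$--derived matrix $\bigl(\begin{smallmatrix}M_{12}\\ M_{22}\end{smallmatrix}\bigr)$.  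Concretely, for $g=1$ with $V=\bigl(\begin{smallmatrix}0&4\\5&3\end{smallmatrix}\bigr)$ and $q=2$ one finds $H_1(S_2)\cong\zz/3\oplus\zz/27$ with $|\mathcal X|=27$, so the row inclusion fails at $p=3$; your argument gives no mechanism linking this $3$ to $|\Tors H_1(R)|$ for an \emph{arbitrary} slice $R$ realizing this metabolizer.  The paper's linking--form bound sidesteps this entirely.
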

\begin{proof}

We use the convention that the Seifert form $V$ is the pairing $V(a,b) = \text{link}(i_+(a),b )$, where $i_+$ is the positive push-off.  For transformations we have matrices acting on the left, and in presentation matrices, the rows give the relations.   

The Seifert matrix of $V$ for the surface $F$  with respect to the basis $\{x'_i\} \cup \{y'_i\}$ for $H_1(F)$ is of the form $$
\left(
\begin{array}{cc}
0  &  M    \\
M^t  +I &   B  
\end{array}
\right)
$$
for some $g$ dimensional square matrices $M$ and $B$, with $B$ symmetric.
 
The first homology of $S_q$ is generated by (all) the lifts of the 
$x_i$ and $y_i$, which we have denoted $\tilde { x}_{i,\alpha}$ and $\tilde{y}_{i,\alpha}$.  As described, for instance in~\cite[page 213]{rolfsen},  a presentation matrix of the first homology of $S_q$ with respect to this basis  is determined by $V$.  In this case the result is a matrix of the form   
$$\left(
\begin{array}{cc}
0  &  \mathcal{ M} \\
\mathcal{M'} & \mathcal{B}      
\end{array}
\right)
$$ 
where $\mathcal{M}$ and $\mathcal{B}$ are $qg$ dimensional matrices that are built out of the blocks of $V$
like so (we illustrate in the case $q=3$):
$$
 \mathcal{ M}=
\left(
\begin{array}{cccc}

 M + I  &-M&0 \\
0  &  M+I  &-M\\
-M &0 &M+I   \\    
\end{array}
\right)     ,  \  \mathcal{ M'}=
\left(
\begin{array}{ccc}

  M^t & -M^t  -I  & 0 \\
 0&  M^t  & -M^t  -I   \\
 -M^t  -I & 0 &  M^t   
\end{array}
\right) \ ,
 $$

$$
 \mathcal{B}=
\left(
\begin{array}{cccc}

 B & -B &0\\
0&  B & -B    \\
-B  &0 &  B \\
\end{array}
\right).
$$
The first columns correspond to the $\tilde{x}_{i,\alpha}$ and the later columns to the $\tilde{y}_{i,\alpha}$. 

 Notice first that $|{\mathcal M|} = | {\mathcal M'|}$  and   $|{\mathcal M|}^2 = |H_1(S_q)|$. 
 
Forming the quotient, $H_1(S_q)/  {\mathcal Y}$ yields a group $\overline {\mathcal X}$    generated by the image of $\mathcal X$.  This quotient is presented by $\mathcal M '$ and thus has order $\sqrt{|H_1(S_q)|}$, so $\mathcal X$ has order at least this large.  Thus, $ | {\mathcal X}_{(p)}|^2 \ge 
|H_1(S_q)_{(p)}|$.  On the other hand, since 
$ {\mathcal X}_{(p)} $ is a self-annihilating  subgroup for a nonsingular form,  $ | {\mathcal X}_{(p)}|^2 \le 
|H_1(S_q)_{(p)}|$.  

We now have $    | {\mathcal X}_{(p)}|^2 =    
|H_1(S_q)_{(p)}|$ and thus $    | {\mathcal X}_{(p)}|  =      |\overline {\mathcal X}_{(p)}| $.  From this we can conclude that ${\mathcal X}_p \cap {\mathcal Y}_{(p)}  =0$, so $H_1(S_q)_{(p)}  =   {\mathcal X}_{(p)}  \oplus {\mathcal Y}_{(p)}$.

\end{proof}
  
\begin{lemma}\label{dualxequalylemma} $   {\mathcal X}'_{(p)} = {\mathcal Y}_{(p)} $.
\end{lemma}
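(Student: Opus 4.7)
The plan is to show $\mathcal{X}' = \mathcal{Y}$ as subgroups of $H_1(S_q)$ outright, which immediately implies equality of $p$-primary parts for any $p$. Both inclusions will follow from a single computation relating the lifts of on-surface curves to the lifts of complementary linking circles, carried out via the standard cyclic cover construction.

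I realize $S_q$ as $q$ copies $Y_0,\ldots,Y_{q-1}$ of $S^3 \setminus F$ glued cyclically along $F$, with $(Y_{\alpha-1})_+$ identified with $(Y_\alpha)_-$. A curve $\gamma'\subset F$ then lifts to a curve $\tilde{\gamma}'_\alpha$ in the seam between $Y_{\alpha-1}$ and $Y_\alpha$; pushing $\tilde{\gamma}'_\alpha$ into $Y_{\alpha-1}$ gives a curve homologous to $i_+(\gamma')$ there, and pushing it into $Y_\alpha$ gives $i_-(\gamma')$ there. Since $H_1(S^3 \setminus F)$ is free on the dual basis $\{x_k,y_k\}$ to $\{x'_k,y'_k\}$, linking numbers recover the expansions
$$
i_+(\gamma') = \sum_k V(\gamma',x'_k)\,x_k + V(\gamma',y'_k)\,y_k, \quad i_-(\gamma') = \sum_k V(x'_k,\gamma')\,x_k + V(y'_k,\gamma')\,y_k .
$$

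I then apply this with $\gamma' = x'_i$. The hypothesis that $\{x'_i\}$ is a basis for the metabolizer $H$ forces $V(x'_i,x'_j)=0$ for all $i,j$, so both $x_k$-terms drop out. After lifting to $S_q$, the two identities become
$$
\tilde{x}'_{i,\alpha} \;=\; \sum_j M_{ij}\,\tilde{y}_{j,\alpha-1} \;=\; \tilde{y}_{i,\alpha} + \sum_j M_{ij}\,\tilde{y}_{j,\alpha},
$$
where $M$ is the upper-right block of $V$. The first equality exhibits each generator of $\mathcal{X}'$ as an element of $\mathcal{Y}$, giving $\mathcal{X}'\subseteq\mathcal{Y}$. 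For the reverse inclusion, I use the first identity at $\alpha+1$ to rewrite $\sum_j M_{ij}\tilde{y}_{j,\alpha}$ as $\tilde{x}'_{i,\alpha+1}$, and then equating the two expressions for $\tilde{x}'_{i,\alpha}$ yields
$$
\tilde{y}_{i,\alpha} = \tilde{x}'_{i,\alpha} - \tilde{x}'_{i,\alpha+1},
$$
so $\mathcal{Y}\subseteq\mathcal{X}'$.

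The only delicate point is the orientation and indexing bookkeeping in the cyclic construction: I need to verify that both expressions for $\tilde{x}'_{i,\alpha}$ really refer to the same lift (so that equating them is legitimate), and that the resulting difference is not tautologically zero but in fact recovers $\tilde{y}_{i,\alpha}$ with the right sign. Once those conventions are pinned down, the argument needs nothing more than the metabolizer condition on $H$; no hypothesis on $p$ enters, so the equality $\mathcal{X}' = \mathcal{Y}$ holds integrally and the $(p)$-subscript in the statement is purely an artifact of the enclosing framework.
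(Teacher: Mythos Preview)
Your argument is correct and is essentially the paper's own proof, carried out more explicitly: the paper also computes that $i_{\pm}$ restricted to $\langle x'_i\rangle$ lands in $\langle y_i\rangle$ via the matrices $M^t$ and $M^t+I$, and then uses the identity $(M^t+I)-M^t=I$ to conclude that each $\tilde y_{i,\alpha}$ lies in the span of $\{\tilde x'_{i,\alpha}\}\cup\{\tilde x'_{i,\alpha+1}\}$. Your formula $\tilde y_{i,\alpha}=\tilde x'_{i,\alpha}-\tilde x'_{i,\alpha+1}$ is exactly this identity written out, and your observation that the equality $\mathcal X'=\mathcal Y$ holds integrally (so the $(p)$-subscript is incidental) is likewise implicit in the paper's treatment.
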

\begin{proof}   The positive and negative push-off maps $i_{\pm}: H_1(F) \to S^3 \setminus F$ send the span of the $x_i'$ to the span of the $y_i$.  Denote the restriction of these maps by $j_{\pm}: \left<\{ x_i' \}\right > \to \left<\{ y_i \} \right>$. With respect to these bases, the maps $j_{\pm}$ are given by the matrices  $M^t$ and $M^t+I$.  Now view these matrices as defining maps from $\zz^g$ to itself with $M^t$ corresponding to an automorphism $T$. Then any element $y \in   \zz^g$ can be written  $y = \text{Id}(y) = (T+Id)(y) - T(y)$.  Thus, Image($j_+$) + Image($j_-) = $ Span$(\{y_i\})$.  Lifting to the $q$--fold branched covers, we see that the $\tilde{y}_{i,\alpha}$ are all in the image of the $\tilde{x}'_{i,\alpha}$. (In more detail, each $  \tilde{y}_{i,\alpha } $ is in the span of the images of the $\{\tilde{x}'_{i,\alpha } \}$ and $ \{\tilde{x}'_{i,\alpha +1 } \} $.)   Also, the images of the  $\tilde{x}'_{i,\alpha}$ are all in   Span$(\{\tilde{y}_{i,\alpha }\})$.  The same thus holds on the level of the $p$--torsion, completing the proof of the lemma.

\end{proof}

\begin{lemma}\label{xkappalemma}  ${\mathcal X}'_{(p)} \subset   \kappa(\Delta, q)_{(p)}$ and $|  \kappa(\Delta, q)_{(p)}|^2 = |H_1(S_q)_{(p)}|$.

\end{lemma}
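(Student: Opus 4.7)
Part~2 follows immediately from Theorem~\ref{thmcg}: since $\kappa(\Delta,q)$ is a metabolizer for the nonsingular linking form on $H_1(S_q)$, we have $|\kappa(\Delta,q)|^2=|H_1(S_q)|$, and this descends to $p$-primary summands because the linking form splits orthogonally by primes.

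For Part~1 the plan is to reduce to showing $\tilde x'_{i,0}\in\kappa(\Delta,q)_{(p)}$ for each~$i$: the deck transformation $\tau$ extends to $(W_q,S_q)$, so $\kappa(\Delta,q)$ is $\tau$-invariant, and $\tau\tilde x'_{i,\alpha}=\tilde x'_{i,\alpha+1}$ then puts every $\tilde x'_{i,\alpha}$ in $\kappa(\Delta,q)_{(p)}$. Because $x'_i\in H$, the class $[x'_i]_R\in H_1(R)$ is torsion of order dividing $T:=|\Tors(H_1(R))|$, so $Tx'_i=\partial c$ for some $2$-chain $c\subset R$. Since $\Delta\subset\partial R$, the neighborhood $N(\Delta)$ is essentially a collar on a piece of $\partial R$, and a Mayer--Vietoris computation for $R=R_\Delta\cup N(\Delta)$ with $R_\Delta:=R\setminus\mathrm{int}\,N(\Delta)$ yields $H_1(R_\Delta)\cong H_1(R)$. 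Thus $Tx'_i$ already bounds in $R_\Delta$ and we may take $c$ disjoint from~$\Delta$.

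Now $c$ lies in the unbranched region of $\pi\colon W_q\to B^4$, so that $\tilde R_\Delta:=\pi^{-1}(R_\Delta)$ is a $q$-fold unbranched cover of $R_\Delta$ sitting inside $W_q$. The plan is to lift $c$ into a single sheet to produce a $2$-chain $\tilde c^{(0)}\subset W_q$ with $\partial\tilde c^{(0)}=T\tilde x'_{i,0}$. Given this, $T\tilde x'_{i,0}$ bounds in $W_q$, so $T\tilde x'_{i,0}\in\kappa(\Delta,q)$; inverting $T$ on the $p$-primary summand (permissible since $\gcd(T,p)=1$) yields $\tilde x'_{i,0}\in\kappa(\Delta,q)_{(p)}$ as required.

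The hard part will be producing this single-sheet lift coherently. The naive total-preimage construction $\pi^{-1}(c)$ only gives the weaker identity $T\sum_\alpha\tilde x'_{i,\alpha}=0$ in $H_1(\tilde R_\Delta)$, and a globally consistent sheet-by-sheet lift exists only when the loops in the $1$-skeleton of $c$ have trivial monodromy in $\pi_1(R_\Delta)\to\zz/q$. To promote the total relation to the individual-sheet statement one must either modify $c$ within its class modulo $H_2(R_\Delta)$ to kill the offending monodromy, or extract the individual sheet information algebraically from the $\zz_{(p)}[\zz/q]$-module structure on $H_1(\tilde R_\Delta)_{(p)}$ (which becomes semi-simple when $p\nmid q$, allowing one to isolate the $\tau$-isotypic components of $[\tilde x'_{i,0}]$). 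This compatibility analysis is the technical heart of the argument and the step that distinguishes lifts of curves in $H$ from lifts of arbitrary curves on $F$, for which the hypothesis $[x'_i]_R\in\Tors(H_1(R))$ is essential.
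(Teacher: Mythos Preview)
Your Part~2 is fine and matches the paper. For Part~1 your overall strategy---multiply by $T=|\Tors(H_1(R))|$ to bound $Tx'_i$ in $R$, lift the bounding chain to $W_q$, then invert $T$ on the $p$-primary summand---is exactly the paper's argument. The paper's entire proof of Part~1 is three lines: for all $z\in H$ one has $\gamma z=0\in H_1(R)$; lifting, $\gamma z'=0\in H_1(W_q)$ for all $z'\in\mathcal X'_{(p)}$; and multiplication by $\gamma$ is an isomorphism on $\mathcal X'_{(p)}$ since $\gcd(p,\gamma)=1$.

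You have, however, manufactured a difficulty that is not there. You treat $\pi^{-1}(R_\Delta)\to R_\Delta$ as a possibly nontrivial $q$-fold cover and worry about monodromy obstructing a single-sheet lift of $c$. In fact this cover is \emph{trivial}: $\pi^{-1}(R_\Delta)$ is $q$ disjoint copies of $R_\Delta$, each mapped homeomorphically by $\pi$. The branched cover $W_q$ is constructed by cutting $B^4$ along $R$ and gluing $q$ copies cyclically, and the lifts of $R$ are precisely the seams of this construction. Equivalently, the unbranched cover over $B^4\setminus\Delta$ is classified by $\pi_1(B^4\setminus\Delta)\to H_1(B^4\setminus\Delta)\cong\zz\to\zz_q$, where the class in $\zz$ is algebraic intersection with $R$; any loop in $R_\Delta$ pushes off $R$ in the normal direction without meeting $\Delta$, hence has intersection $0$ with $R$, so the composite $\pi_1(R_\Delta)\to\zz_q$ is zero.

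Once you see this, your $2$-chain $c$ lifts tautologically to each sheet, $\partial\tilde c^{(0)}=T\tilde x'_{i,0}$ holds immediately, and the ``technical heart'' you anticipate evaporates. No $\zz_{(p)}[\zz_q]$-module analysis or modification of $c$ is needed; the proposal as written stops short of a proof precisely where the actual argument is already finished.
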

\begin{proof}
Let $\gamma = |\text{Torsion}(H_1(R))|$.  Then for all $z \in H$,  $\gamma z = 0 \in H_1(R)$.  Lifting, we see that for all $z' \in {\mathcal X}'_{(p)}$, $\gamma z' = 0 \in H_1(W_q)$, so $\gamma  {\mathcal X}'_{(p)} \subset \kappa(\Delta, q)_{(p)}$.  But multiplication by $\gamma$ is an isomorphism on $   {\mathcal X}'_{(p)}$ since $p$ is relatively prime to $\gamma$.   

We have from Theorem~\ref{thmcg} that  $|  \kappa(\Delta, q) |^2 = |H_1(S_q) |$, so the same holds for the $p$--torsion.
\end{proof}

\begin{lemma}\label{localizelemma}${\mathbb Y}_{(p)} = \mathcal Y_{(p)}$

\end{lemma}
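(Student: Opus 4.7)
The plan is to exploit the Seifert-form relations contained in the presentation of $H_1(S_q)$ from the proof of Lemma~\ref{relateyxlemma} to express every $\tilde y_{i,\alpha}$ in the $\mathbb{Z}_{(p)}$-span of $\{\tilde y_{j,0}\}_j$. Writing $\tilde y_\alpha=(\tilde y_{1,\alpha},\dots,\tilde y_{g,\alpha})^T$, the top row block $(0,\mathcal M)$ of that presentation yields, cyclically for each $\alpha\in\mathbb Z_q$, the matrix identity
\[
(M+I)\,\tilde y_\alpha \;=\; M\,\tilde y_{\alpha+1}
\]
in $H_1(S_q)$. If $M$ is invertible over $\mathbb{Z}_{(p)}$, one solves $\tilde y_{\alpha+1}=M^{-1}(M+I)\,\tilde y_\alpha$ in $H_1(S_q)_{(p)}$; induction on $\alpha$ starting from $\alpha=0$ then places every $\tilde y_{i,\alpha}$ in $\mathbb Y_{(p)}$, giving the nontrivial inclusion $\mathcal Y_{(p)}\subseteq\mathbb Y_{(p)}$, while the reverse inclusion is obvious.

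The lemma therefore reduces to the single assertion that \emph{$\det M$ is a unit in $\mathbb{Z}_{(p)}$ whenever $p\nmid|\Tors(H_1(R))|$}, and this is the main obstacle and where I expect the only real work to lie. My proposed route to it is topological: the Seifert-form entry $M_{ij}=V(x_i',y_j')$ is to be interpreted through the bounding 3-manifold $R$. Since each image $\phi(x_i')\in H_1(R)$ is torsion of some order $n_i$ dividing $|\Tors(H_1(R))|$, there are 2-chains $C_i\subset R$ with $\partial C_i=n_i x_i'$; the standard identity $n_iV(x_i',y_j')=C_i\cdot y_j'$ (intersection in $R$, with $y_j'$ viewed as a $1$-cycle in $F\subset\partial R$) then expresses $M$, after clearing the $n_i$, as the matrix of an intersection pairing computed in $R$.

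After localizing at $p$ each $n_i$ becomes a unit of $\mathbb{Z}_{(p)}$ by hypothesis, so every $x_i'$ bounds a 2-chain in $R$ with $\mathbb{Z}_{(p)}$-coefficients. Combined with the Poincar\'e--Lefschetz calculation from Appendix~\ref{hlemma}, which identifies $H_1(R,F)_{(p)}$ with a free $\mathbb{Z}_{(p)}$-module of rank $b_2(R)$, this lets one identify $\det M$, up to units of $\mathbb{Z}_{(p)}$, with the determinant of the induced intersection pairing between the rank-$g$ classes of the $C_i$ in $H_2(R;\mathbb{Z}_{(p)})$ and the image of $\{y_j'\}$ in $(H_1(R)/\Tors)\otimes\mathbb{Z}_{(p)}$; unimodularity of this pairing, and hence unit-ness of $\det M$, follows from the duality. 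The delicate point to navigate is the indeterminacy of the $C_i$ modulo $H_2(R)$, which must be controlled in a way compatible with the Seifert-form sign conventions; once that is done, the inductive reduction of the first paragraph closes the proof at once.
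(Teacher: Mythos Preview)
Your overall strategy coincides with the paper's: both reduce the lemma to the single relation $(M+I)\tilde y_\alpha = M\tilde y_{\alpha+1}$ read off from the block $\mathcal M$, and both finish by showing $M$ (and $M+I$) become invertible over $\zz_{(p)}$, so that the induction on $\alpha$ runs.  The divergence is entirely in how that invertibility is obtained, and your proposed route has a genuine gap.

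The ``standard identity'' $n_iV(x_i',y_j')=C_i\cdot y_j'$ is not correct.  With $C_i\subset R$ a $2$--chain bounding $n_i x_i'\subset F$ and $y_j'$ pushed slightly into $R$ from $F\subset\partial R$, the only contribution to $C_i\cdot y_j'$ comes from the collar, and there one sees $C_i\cdot y_j'=n_i\,(x_i'\cdot_F y_j')=n_i\,\delta_{ij}$.  In other words, intersection with $C_i$ recovers the \emph{intersection} form on $F$, not the Seifert form; it sees $I$, not $M$.  The Seifert pairing $V(x_i',y_j')=\lk_{S^3}(i_+(x_i'),y_j')$ depends on the push--off into $S^3\setminus F$ and is simply not computed by intersections inside $R$.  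So the duality argument you sketch cannot produce $\det M$, and the ``delicate point'' you flag is not a matter of sign conventions but a wrong identification.

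Worse, the target statement itself---that $p\nmid|\Tors(H_1(R))|$ forces $\det M\in\zz_{(p)}^{\times}$---is false as stated.  For a genus one slice knot with $R$ a solid torus (so $\Tors(H_1(R))=0$) one has $M=(m)$ with $m=m(K)$, and any prime $p\mid m$ violates your conclusion.  The lemma survives because for such $p$ one checks $(m+1)^q-m^q\equiv 1\pmod p$, so $H_1(S_q)_{(p)}=0$ and the statement is vacuous; but that shows the hypothesis on $R$ alone cannot yield invertibility of $M$---one must also use that $p\mid |H_1(S_q)|$.

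The paper supplies exactly that missing ingredient, and does so algebraically rather than through $R$.  Writing $H_1(S_q)\cong\Lambda^{2g}/(\Gamma+t(I-\Gamma))\Lambda^{2g}$ with $\Gamma=(V-V^t)^{-1}V$, one gets $\Gamma=t(\Gamma-I)$ on $H_1(S_q)$ and hence $\Gamma^q=(\Gamma-I)^q$ there; expanding gives $\Gamma\cdot g(\Gamma)=I$ on $H_1(S_q)$ for a polynomial $g$ with leading coefficient $q$.  Since $p\mid|H_1(S_q)|$ forces $p\nmid q$, this makes $\Gamma$ invertible over $\zz_{(p)}$, and the block--triangular shape $\Gamma=\begin{pmatrix}M^t+I&B\\0&-M\end{pmatrix}$ then gives invertibility of $M$ and $M+I$.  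This is the input your induction needs; the topology of $R$ plays no role in this particular lemma.
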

\begin{proof}  Let $\Lambda = \zz[\zz_q]$, the group ring of the cyclic group. We write $\zz_q$ multiplicatively, generated by $t$.  The standard derivation of a presentation of the homology $H_1(S_q)$, such as in~\cite{rolfsen}, is a Mayer-Vietoris argument.  The homology groups involved are all modules over $\Lambda$, where $t$ acts by the deck transformation.  From this viewpoint, the Mayer-Vietoris sequence now yields that as a $\Lambda$--module the homology is given as a quotient $H_1(S_q) \cong \Lambda^{2g} / (V-tV^t)\Lambda^{2g}$.  

Since $V- V^t$ is invertible, we can multiply the quotienting submodule by $(V - V^t)^{-1}$ without changing the quotient space.  Some elementary algebra then shows that $$H_1(S_q) \cong  \Lambda^{2g} /( \Gamma + t(I - \Gamma)) \Lambda^{2g},$$
where $\Gamma = (V -V^t)^{-1} V$.

It is clear from this that for any $z \in \Lambda^{2g}$ we have $ \Gamma z =  t ( \Gamma -I) z \in H_1(S_q)$.  Thus, $ \Gamma^q z  =  t^q(  \Gamma -I)^q z  \in H_1(S_q)$.  However, $t^q = 1$, so $\Gamma^q - (\Gamma - I)^q$ annihilates $H_1(S_q)$.

Expanding, we have that for some polynomial $f$ with 0 constant term and of degree $q-1$, $f(\Gamma) = I$ acting on $H_1(S_q)$.  The leading coefficient of $f$ is $q$.  If $p$ does not divide the order $|H_1(S_q)|$, the lemma is immediately true, so assume $p$ divides  the order $|H_1(S_q)|$.  We know that $p$ is relatively prime to $q$.  Thus, we can switch to $\zz_{(p)}$--coefficients, in which case the leading coefficient of $f$ is a unit, and we see that with $\zz_{(p)}$--coefficients, $\Gamma$ is invertible.

We now focus on the Siefert matrix $V$ of the algebraically slice knot.  In the coordinates we have been using, we see that 
$$\Gamma = \left(
\begin{array}{cc}
M^t  +I&  B \\
0   &   -M
\end{array}
\right).
$$
From this we conclude that with $\zz_{(p)}$--coefficients, $M$ and $M+I$ are both invertible.  Recall that for each $k$,  $M$ and $M+I$   determine the maps from  Span($\tilde{x}'_{i, k}$) and  Span($\tilde{x}'_{i, k+1}$)  to Span($\tilde{y}_{i,k}$).  Thus, any element in  Span($\tilde{y}_{i, k }$) is also in Span($\tilde{y}_{i, k+1}$).  This completes the proof of the lemma.

\end{proof}

\end{appendix}

\vskip.2in
\noindent{\it Acknowledgments\ } We thank John Ewing for his help with the number theoretic aspects of this project.  In particular, John pointed out to us the relationship between the determinant   $\mathbb{D}({s})$ and class numbers.  We also appreciate Stefan Friedl's careful reading and thoughtful commentary on our initial draft of this article. 


\newcommand{\etalchar}[1]{$^{#1}$}
 
\end{document}